\newtheorem{thm}{Theorem}[section]
\newtheorem{cor}[thm]{Corollary}
\newtheorem{prop}[thm]{Proposition}
\newtheorem{lem}[thm]{Lemma}
\newtheorem{quest}[thm]{Question}
\theoremstyle{definition}
\newtheorem{defn}[thm]{Definition}
\newtheorem{exmp}[thm]{Example}
\theoremstyle{remark}
\def\corr{$^{*}$\protect\footnotetext{$^{*}$ C\lowercase{orresponding Author.}}}
\let\c@equation\c@thm
\numberwithin{equation}{section}
\title{The Lattice of subracks is atomic}
\author[A. Saki]{A. Saki}%$^{*}$
\address{Amir Saki, Department of Pure Mathematics, Faculty of Mathematics and Computer Science, Amirkabir University of Technology (Tehran Polytechnic), 424, Hafez Ave., Tehran 15914, Iran.}
\email{amir.saki.math@gmail.com}
\author[D. Kiani]{D. Kiani\corr}
\address{Dariush Kiani, Department of Pure Mathematics, Faculty of Mathematics and Computer Science, Amirkabir University of Technology (Tehran Polytechnic), 424, Hafez Ave., Tehran 15914, Iran, and School of Mathematics, Institute for Research in Fundamental Sciences (IPM), P.O. Box 19395-5746, Tehran, Iran.}
\email{dkiani@aut.ac.ir,  dkiani7@gmail.com}
\newcommand*{\rom}[1]{\expandafter\@slowromancap\romannumeral #1@} \makeatother
\begin{document}
\newcommand{\tg}{\triangleright}
\newcommand{\tgl}{\triangleright^{\iota}}
\newcommand{\oo}{\overline}
\begin{abstract}
A rack is a set together with a self-distributive bijective  binary operation. In this paper, we give a positive answer to a question due to Heckenberger, Shareshian and Welker. Indeed, we prove that the lattice of subracks of a rack is atomic. Further, by using the  atoms, we associate certain quandles to racks. We also  show that the lattice of subracks of a rack is isomorphic to the lattice of subracks of a quandle. Moreover, we show that the lattice of subracks of a rack is distributive if  and only if its corresponding quandle is trivial. Finally, applying our corresponding quandles, we provide a coloring of certain knot diagrams.

\smallskip
\vspace{5pt}
\noindent \textsc{Keywords.} Rack, quandle, lattice of subracks, atomic lattice, knot.\end{abstract}

\maketitle

\section{Introduction}
In 1943, a certain algebraic structure, known as \textit{key} or \textit{involutory quandle}, was introduced by M. Takasaki in \cite{Tak} to study the notion of reflection in the context of finite geometry. In 1959, J. C. Conway and G. C. Wraith introduced a more general algebraic structure  called \textit{wrack} in an unpublished correspondence. In 1982, D. ~Joyce  for the first time used the word \textit{quandle} for an algebraic and combinatorial structure to study \textit{knot invariants} \cite{Joy}. Joyce's definition of quandle is the same as the one which is nowadays used.

Let $R$ be a set together with a binary operation $\tg$ which satisfies the equality $a\tg(b\tg c)=(a\tg b)\tg(a\tg c)$, for all $a,b,c\in R$. This equality is called (\textit{left}) \textit{self-distributivity} identity. A \textit{knot} is an embedding of  $S^1$ in $\mathbb{R}^3$. In 1984, S. Matveev, and in 1986, E. Brieskorn independently used  self-distributivity systems  to study the isotopy type of braids and knots, in \cite{Mat} and \cite{Bri}, respectively. In 1992, R. Fenn and C. Rourke initiated to use the work \textit{rack} instead of wrack. They used racks to study links and knots in 3-manifolds \cite{Fen}. A rack is indeed a generalization of the concept of quandle. Racks are used to encode the movements of knots and links in the space. Knots are represented by the so-called \textit{knot diagrams}. Figure~1 is an example of a knot diagram. Homomorphisms of quandles have an interpretation as colorings of knot diagrams. 
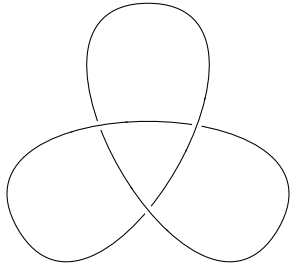
\begin{figure}\label{f1}
\begin{center}
\begin{tikzpicture}
\begin{knot}[
  consider self intersections=true,
%  draft mode=crossings,
  flip crossing=2,
  only when rendering/.style={
%    show curve controls
  }
  ]
\strand (0,2) .. controls +(2.2,0) and +(120:-2.2) .. (210:2) .. controls +(120:2.2) and +(60:2.2) .. (-30:2) .. controls +(60:-2.2) and +(-2.2,0) .. (0,2);
\end{knot}
\end{tikzpicture}
%\vspace{-22}
\caption{\small{Trefoil knot}}
\end{center}
\end{figure}

 Knot theory has been already applied in various areas of research, like computer science, biology, medical science and mathematical physics \cite{Mur}.

In the following, the definition of a rack and some known examples of racks are given.
\begin{defn}
A \textit{rack} $R$ is a set together with a  binary operation $\tg$ such that 
\begin{enumerate}
\item
for all $a$, $b$ and $c$ in $R$, $a\tg(b\tg c)=(a\tg b)\tg(a\tg c)$, and
\item
for all $a$ and $b$ in $R$ there exists a unique $c\in R$ with $a\tg c=b$.
\end{enumerate}
\end{defn}
Conditions (1) and (2) are called \textit{self-distributivity} and \textit{bijectivity}, respectively. A rack $R$ is called a \textit{quandle} if it satisfies the following additional condition:
 \[a\tg a=a,\quad \text{for all }a\in R.\]
It follows from the bijectivity condition of racks that the function $f_a: R\rightarrow R$ with $f_a(b)=a\tg b$ is bijective, for all $a\in R$. Therefore, by self-distributivity we have $f_a(b)\tg f_a(c)=f_af_b(c)$, for all $a,b,c\in R$.
\begin{exmp}\label{st}
The followings are some known examples of racks:
\begin{enumerate}
\item
Let $R$ be a set and $a\tg b=b$, for all $a,b\in R$. Then $R$ is a quandle,  called the \textit{trivial quandle}.
\item
Let $R$ be a set and $f$ be a permutation on $R$. Define $a\tg b=f(b)$, for all $a,b\in R$. Then $R$ is a rack, but not a quandle.
\item
Let $A$ be an abelian group and $a\tg b=2a-b$, for all $a,b\in A$. Then $A$ is a quandle, called the \textit{dihedral quandle}.
\item
Let $G$ be a group and $a\tg b=ab^{-1}a$, for all $a,b\in G$. Then $G$ is a quandle, called the \textit{core  quandle} (or \textit{rack}).
\item
Let $S=\mathbb{Z}[t,t^{-1}]$ be the ring of Laurent polynomials with integer coefficients, and $M$ be an $S$-module. Define $a\tg b=(1-t)a+tb$, for all $a,b\in M$. Then $M$ is a quandle, called the \textit{Alexander quandle}.
\item
Let $S=\mathbb{Z}[t,t^{-1},s]$ be the ring of all polynomials over $\mathbb{Z}$ with the variables $s,t,t^{-1}$ such that $t$ is invertible with the inverse $t^{-1}$. Assume that $R=S/\left<s^2-s(1-t)\right>$, and $M$ is an $R$-module. Let $x\tg y=\overline{s}x+\overline{t}y$, for all $x,y\in M$, where $\overline{s}$ and $\overline{t}$ denote $s+\left<s^2-s(1-t)\right>$ and $t+\left<s^2-s(1-t)\right>$, respectively. Then $M$ is a rack, called the \textit{$(s,t)$-rack}. It is easy to observe that an $(s,t)$-rack is not a quandle, whenever $s$ is not invertible. Note that if $s$ is invertible, then it follows from $s^2=s(1-t)$ that $s=1-t$, and hence $M$ is the Alexander quandle. One could see that $(2,-1)$-racks and dihedral racks are the same. 
\end{enumerate}
\end{exmp}

\begin{exmp}
Two knots $K$ and $K'$ are called \textit{equivalent} or \textit{ambient isotopic}, if there is a  continuous map $F:\mathbb{R}^3\times[0,1]\to\mathbb{R}^3$ such that 
\begin{enumerate}
\item
for any $t\in[0,1]$ and $x\in\mathbb{R}^3$, the map $F_t(x)=F(x,t)$ is a homeomorphism, 
\item
$F_0=id_{\mathbb{R}^3}$,
\item
$F_1(K)=K'$.
\end{enumerate}

In the above definition, $F$ is called an \textit{equivalence} or \textit{ambient isotopy}.\\
A knot diagram is indeed the projection of the image of a knot on a plane. Figure~1 is an example of a knot diagram called trefoil. %A \textit{link} is a set of knots which do not intersect each other, but they may be  knotted together (see figure \ref{2f}).
%\begin{figure}[H]\label{2f}
%\begin{center}
%\includegraphics[scale=.50]{link}
%\caption{\small{An example of a link diagram}}
%\end{center}
%\end{figure}
%Clearly, every knot is a link. A \textit{tangle} is a part of a knot diagram which it has inputs and outputs. You see an example of a tangle in the figure \ref{f3}.
%\begin{center}
%\begin{figure}[H]\label{f3}
%\includegraphics[scale=.55]{tangle}
%\caption{\small{An example of a tangle with 4 inputs and 2 outputs}}
%\end{figure}
%\end{center}
%A tangle with the same number of inputs and outputs is called a \textit{braid} (see figure \ref{f4}).
%\begin{figure}[H]\label{f4}
%\begin{center}
%\includegraphics[scale=.55]{braid}
%\caption{\small{An example of a braid}}
%\end{center}
%\end{figure}
%It is known that every diagram of a braid is a part of a link diagram. The converse of this proposition is also hold.\\
It is known that two diagrams represent  a same knot if and only if we can obtain one of them from the other one by a finite sequence of three types of movements, called \textit{Reidemeister} moves (see Figure 2).
\begin{figure}\label{F}
\begin{center}
\subfigure{}
\includegraphics[scale=.5,trim={0 3cm 0 3.5cm},clip]{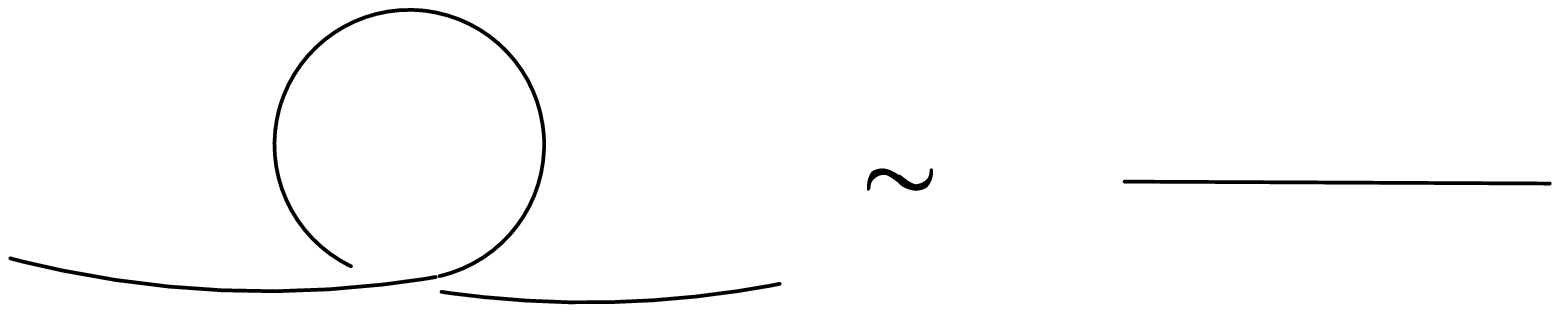}
\subfigure{}
\includegraphics[scale=0.5,trim={0 2.5cm 1.5cm 2.5cm},clip]{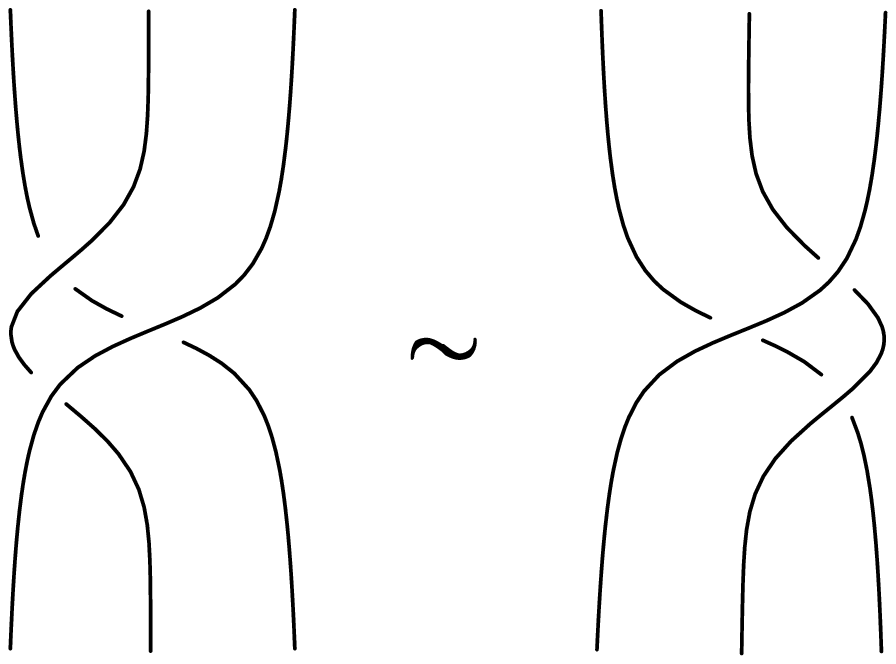}
\subfigure{}
\includegraphics[scale=.5,trim={0 1.25cm 1.1cm 1.8cm},clip]{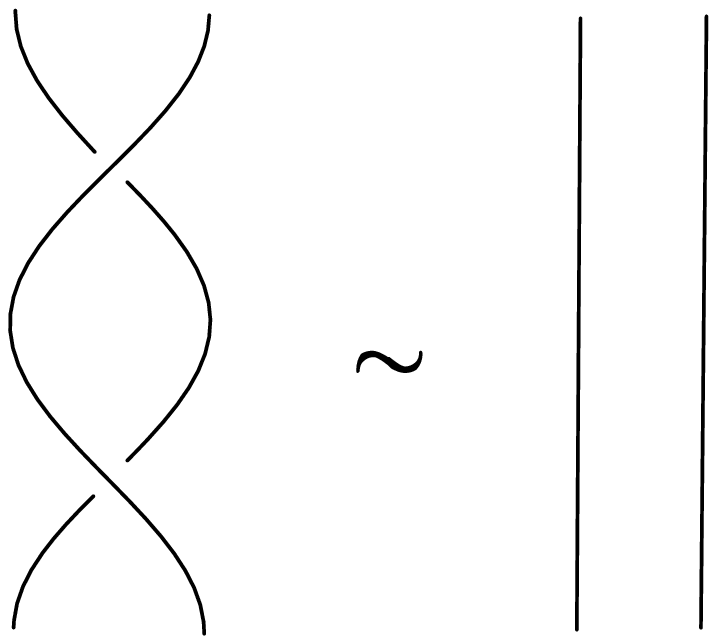}
\caption{\small{Reidemeister moves}}
\end{center}
\end{figure}

Let $K(t)$ be a parametrization of a knot $K$. Then a natural orientation is assigned to $K$ (as $t$ increases). %A tangle accepts an orientation by give a same orientation (up or down) to all strands of it. You see an orientated link in the figure \ref{f5}.
%\begin{figure}[H]\label{f5}
%\begin{center}
%\includegraphics[scale=.5]{OrientedLink}
%\caption{\small{An example of an oriented link}}
%\end{center}
%\end{figure}
Orientations of knot  diagrams determine two types of crossings: positive or negative.  Consider two strands in a diagram which  cross each other (see Figure 3). Suppose that someone stands on the  strand on top whose face is in the direction of this strand. According to Figure 3, the strand which lies under the other one could be considered as two strands: one in the left-hand side of the person, and one  in the right-hand side. If the direction of the bottom strand  is from the right-hand side to the left-hand side of the person, then we have the positive crossing. Otherwise, we have the negative crossing.
\begin{figure}\label{fig3}
\includegraphics[scale=.5,trim={0 3cm 0 3cm},clip]{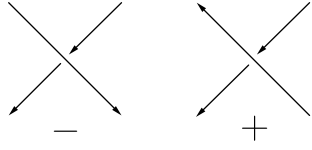}
\caption{\small{Negative and positive crossings}}
\end{figure}
%Some researchers in the field of knot theory, use the following definition of quandles. They called a set $Q$ together with a binary operation $\tg$ a quandle, if the following conditons hold:
%\begin{itemize}
%\item
%$(a\tg b)\tg c=(a\tg c)\tg(b\tg c)$, for all $a,b,c\in Q$,
%\item
%for all $a,b\in Q$, there exists a unique element $c\in Q$ which for $c\tg a=b$ ( the element $c$ is denoted by $b\tg^{-1} a$, too).
%\end{itemize}
%Our results in this paper are right for this definition of quandle.\\

In knot theory, quandles are applied to distinguish knots by \textit{coloring} knot diagrams. In fact, let $(Q,\tg)$ be a quandle. Also, assume that $\tg^{-1}$ is a binary operation on $Q$. We can assign an element of $Q$ as a color to a strand of a knot diagram.  To do this, we simply  identify the strands and their colors.

Consider a positive (resp. negative) crossing as shown in Figure 4. By assigning the color $x$ to the top strand and the color $y$ to the right (resp. left) side strand, we assign the color $x\tg y$ (resp. $x\tg^{-1}y$) to the left (resp. right) side strand. 
\begin{figure}
\begin{center}
\includegraphics[scale=.55,trim={0 3.2cm 0 3.7cm},clip]{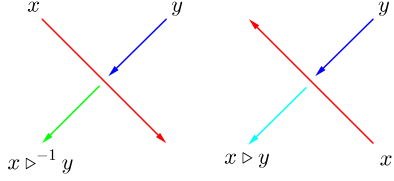}
\caption{\small{Assigning colors to strands}}
\end{center}
\end{figure}
As shown in Figure 5, by compatibility of Reidemeister moves and quandle conditions, we have $f_x^{-1}(y)=x\tg^{-1}y$, where $f_x^{-1}$ is the inverse of $f_x$ with $f_x(y)=x\tg y$, for all $x,y\in Q$.
\begin{figure}
\begin{center}
\subfigure{}
\includegraphics[scale=.5,trim={0 3cm 0 4cm},clip]{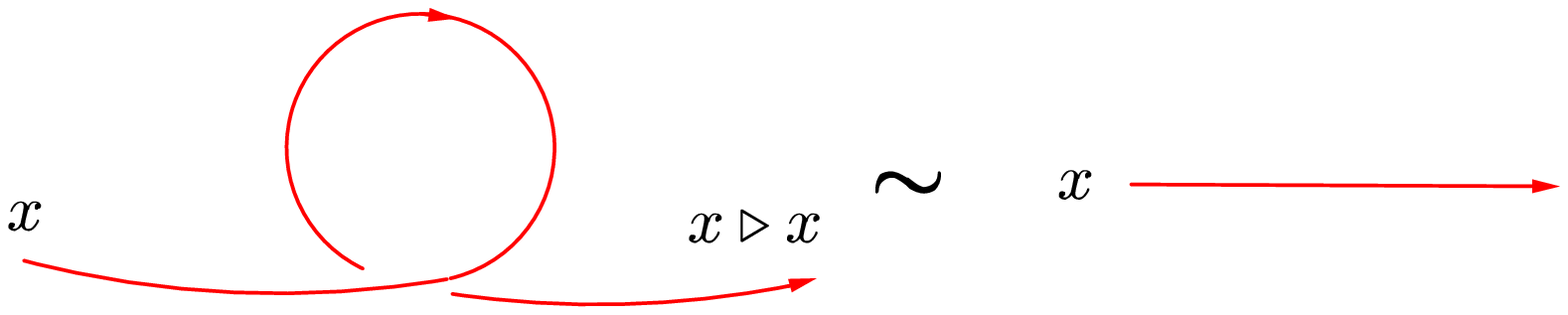}
\subfigure{}
\includegraphics[scale=.6,trim={0 1.5cm 0 3cm},clip]{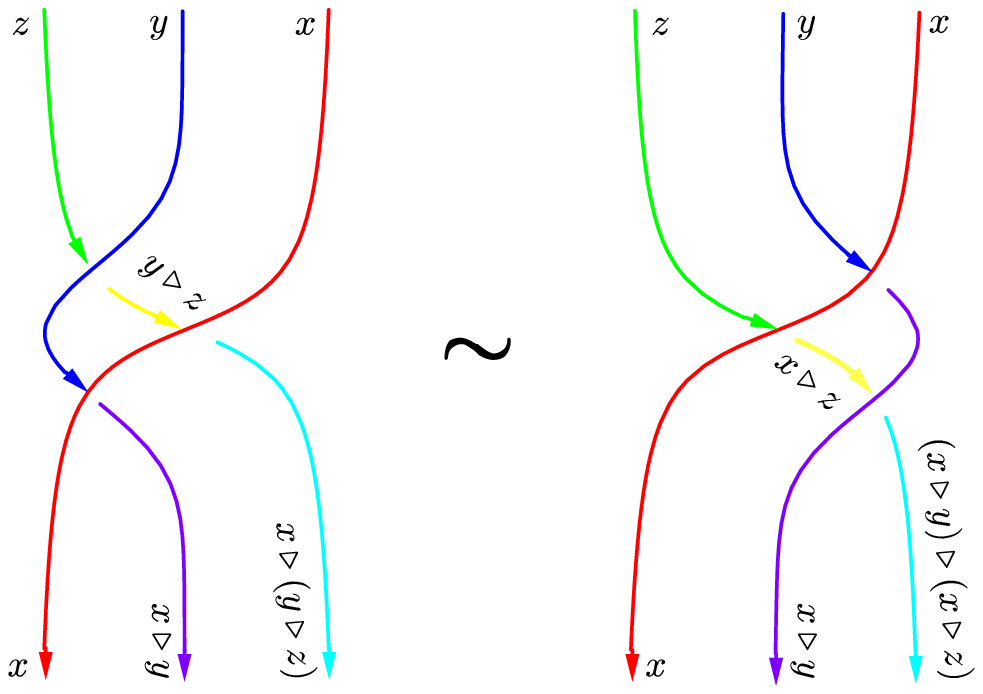}
\subfigure{}
\includegraphics[scale=.55,trim={0 1.2cm 0 3cm},clip]{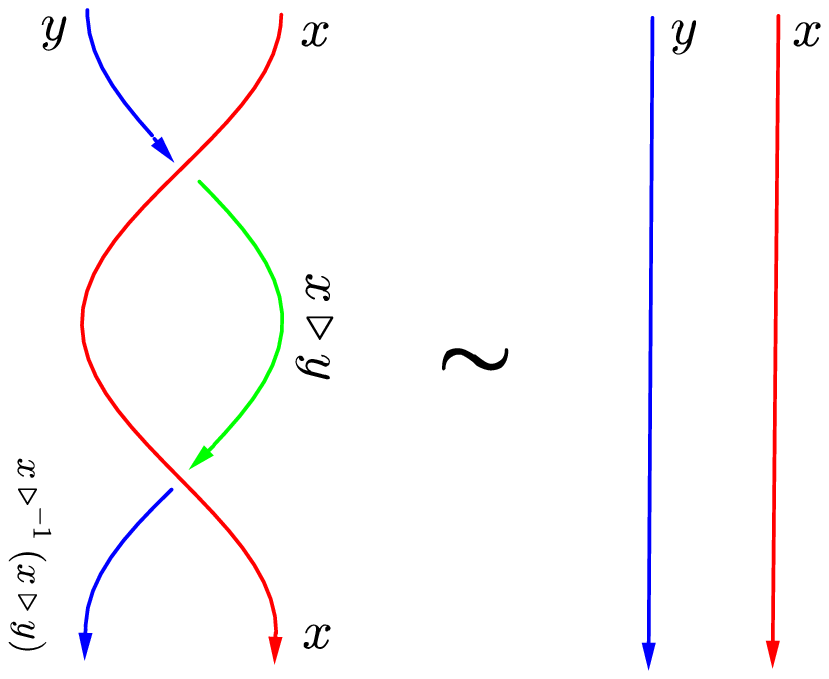}
\caption{\small{Compatibility of Reidemeister moves and quandle conditions}}
\end{center}
\end{figure}
%\begin{figure}[H]\label{f8}
%\begin{center}
%\includegraphics[scale=.5]{Axiom3}
%\vspace{10}
%\caption{\small{Bijectivity axiom}}
%\end{center}
%\end{figure} 

As an example, we show that two oriented knots $5_1$ and $5_2$ in Figure~6 and Figure~7, respectively, are not equivalent.
To do this, we use dihedral quandle on $\mathbb{Z}_5$ to get the coloring given in Figure~6.
\begin{figure}\label{f8}
\begin{center}
\includegraphics[scale=.45,trim={0 -.2 0.5cm 0},clip]{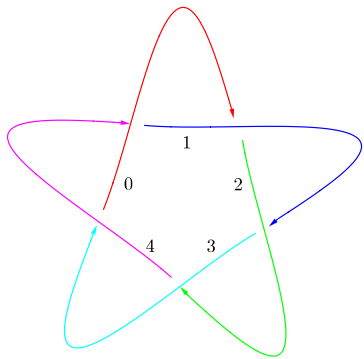}
\caption{\small{A coloring of $5_1$ by the dihedral quandle on $\mathbb{Z}_5$}}
\end{center}
\end{figure}
But we can not color $5_2$ with dihedral quandle $\mathbb{Z}_5$, such that at least two distinct colors are used. Indeed, suppose that we can color $5_2$ as  shown in Figure 7.
\begin{figure}\label{f8}
\begin{center}
\includegraphics[scale=.5,trim={0 .8cm 1.5cm 1.2cm},clip]{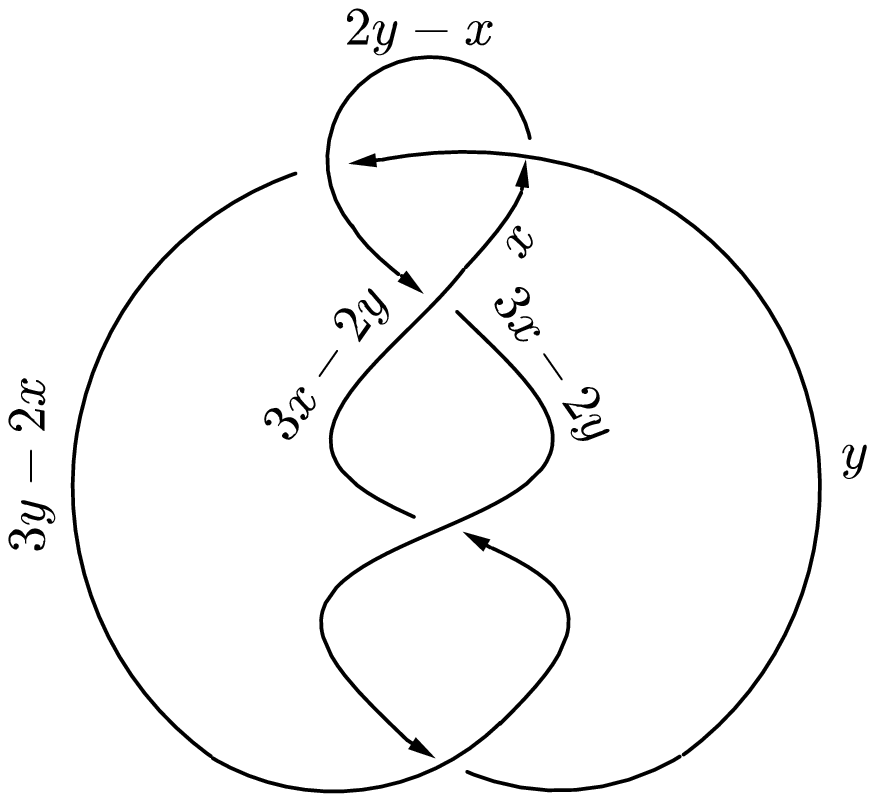}
\caption{\small{A coloring of $5_2$ by the dihedral quandle on $\mathbb{Z}_{5}$}}
\end{center}
\end{figure}
Thus $3x-2y=x$, and hence $x=y$. 
\end{exmp}

In the next example, we provide a new example of a rack which is not a quandle.

\begin{exmp}\label{ex1}
For any integers  $a$ and $b$, we define 
\[ a\tg b=\left\{\begin{array}{ll}b& ,\text{if }b \text{ is even},\\ b+2& ,\text{if }b \text{ is odd.}\end{array}\right.\]
Then it is observed that $\mathbb{Z}$ together with the above binary operation is a rack which is not a quandle.
\end{exmp}

Let $(R,\tg)$ be a rack. A subset $Q$ of $R$ is called a \textit{subrack} of $R$ if $(Q,\tg)$ is a rack. The poset of all subracks of  $R$, denoted by $\mathcal{R}(R)$, together with the inclusion relation is a lattice. I. Heckenberger et al. \cite{wel}  showed that the order complex of $\mathcal{R}(R)$ is not Cohen-Macaulay in general.

Let $G$ be a group. Define  $a\tg b=aba^{-1}$, for all $a,b\in G$. Then $(G,\tg)$ is a quandle. This rack was also studied in \cite{wel} by I. Heckenberger et al. where they considered some sublattices of $\mathcal{R}(G)$ and specified their homotopy types. For example, let $Q$ be the subrack of all transpositions of $S_n$. Then $\mathcal{R}(Q)$ is isomorphic to $\Pi_n$ which is the lattice of all partitions of a set with $n$ elements. It is known that $\Pi_n$ has the homotopy type of a wedge of $(n-2)$-spheres. As another example discussed in \cite{wel}, let $p$ be an odd prime number and $n>4$ be an integer  with $2p\le n$. Assume that $\Pi_{n,p}$ is the sublattice of all elements $B=B_1|B_2|\cdots|B_t$ of $\Pi_n$ such that $|B_i|=1$ or $|B_i|\ge p$, for all $1\le i\le t$. If $L$ is the subrack of all $p$-cycles in the alternative group $A_n$, then $\mathcal{R}(L)$ is isomorphic to $\Pi_{n,p}$, and hence it has the homotopy type of a wedge of spheres of (possibly) different dimensions.

In the last section of \cite{wel}, some questions were posed by the authors concerning the lattice of subracks of $R$. Among them, we focus on the following question:
\begin{quest}[{\cite[Question~1]{wel}}]\label{Q}
Is $\mathcal{R}(R)$ atomic for all racks $R$?
\end{quest}

We recall the definition of an \textit{atomic lattice} in the following. Let $L$ be a lattice with the least element $0$. An element $a\in L$ is called an \textit{atom} whenever $x<a$ implies that $x=0$. Then $L$ is called atomic if every element of $L$ is the join of its atoms. \\

This paper is organized as follows. In Section 2, we prove our main results. First, we prove that the lattice of subracks of any rack is atomic, which gives a positive answer to Question \ref{Q}. Next, we define a certain binary operation on the set of the atoms of a rack. Then, we show that the set of atoms together with this operation is a quandle. Moreover, we show that the lattice of subracks of this quandle is isomorphic to the lattice of subracks of the rack from which the quandle has been obtained. Furthermore, we show that the lattice of subracks of a rack is distributive if and only if its corresponding quandle is trivial. In  Section 3, we discuss a certain type of racks, called the $(s,t)$-racks. In particular, we determine their atoms. Finally, we use the obtained quandles from $(s,t)$-racks to color  certain knot diagrams.
\section{Main Results}
In this section, we prove our main results. First, we show that the lattice of subracks of a rack is atomic, which gives a positive answer to Question \ref{Q} (posed in \cite{wel}). For this purpose, the following lemmas are needed. 
\begin{lem}\label{0}
Let $R$ be a rack. For any $a$ and $b$ in $R$ we have 
\begin{enumerate}
\item
$f_{f_a(b)}=f_af_bf_a^{-1}$,
\item
$f_{f_a^{-1}(b)}=f_a^{-1}f_bf_a$.
\end{enumerate}
\end{lem}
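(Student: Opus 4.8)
The plan is to derive both identities directly from the self-distributivity axiom, rewritten in terms of the translation maps $f_a$. Recall from the discussion after the definition of a rack that self-distributivity says precisely that $f_a(b\tg c)=f_a(b)\tg f_a(c)$, i.e.\ $f_af_b(c)=f_{f_a(b)}f_a(c)$ for all $c\in R$. So the first step is simply to read off from this the operator identity $f_af_b=f_{f_a(b)}f_a$.

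For part (1), I would then compose on the right with $f_a^{-1}$ (which exists by the bijectivity axiom) to obtain $f_{f_a(b)}=f_af_bf_a^{-1}$. For part (2), the cleanest route is to apply part (1) with $b$ replaced by $f_a^{-1}(b)$: this gives $f_{f_a(f_a^{-1}(b))}=f_af_{f_a^{-1}(b)}f_a^{-1}$, and since $f_a\circ f_a^{-1}=\mathrm{id}_R$ the left-hand side is just $f_b$, so rearranging yields $f_{f_a^{-1}(b)}=f_a^{-1}f_bf_a$.

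There is essentially no obstacle here; the only point requiring a word of care is making sure the rewriting of self-distributivity into the form $f_af_b=f_{f_a(b)}f_a$ is justified, which it is because $f_a(b\tg c)=f_a(b)\tg f_a(c)$ holds for every $c$, and $x\tg y=f_x(y)$ by definition. Everything else is a one-line manipulation of bijections on $R$.
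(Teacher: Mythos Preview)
Your proof is correct and follows essentially the same approach as the paper: both read self-distributivity as the operator identity $f_af_b=f_{f_a(b)}f_a$ and then compose with $f_a^{-1}$. The only cosmetic difference is that for part~(2) the paper repeats the direct computation, whereas you obtain it more efficiently by substituting $f_a^{-1}(b)$ for $b$ in part~(1).
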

\begin{proof}
Let $c\in R$. Then by self-distributivity 
\[f_{f_a(b)}(c)=f_a(b)\tg c=f_a(b)\tg f_af_a^{-1}(c)=f_a(b\tg f_a^{-1}(c))=f_af_b f_a^{-1}(c).\]
Thus $f_{f_a(b)}=f_af_bf_a^{-1}$ which proves (1). To prove the second equality, we have 
\[ f_a(f_a^{-1}(b)\tg (c))=(f_af_a^{-1}(b))\tg f_a(c)=b\tg f_a(c)=f_bf_a(c).\]
Therefore $f_{f_a^{-1}(b)}(c)=f_a^{-1}(b)\tg c=f_a^{-1}f_bf_a(c)$, and hence $f_{f_a^{-1}(b)}=f_a^{-1}f_bf_a$.
\end{proof}
It follows easily from Lemma \ref{0} that $f_{f_a(b)}^{-1}=f_af_b^{-1}f_a^{-1}$ and $f_{f_a^{-1}(b)}^{-1}=f_a^{-1}f_b^{-1}f_a$. 
%\begin{lemma}\label{lem}
%Let $R$ be a rack. For any non-negative integers $n,m$ and $a,b\in R$, we have 
%\begin{enumerate}
%\item
%$f_{f_a^n(a)}^m=
%\item 
Let $S$ be a subset of a rack $R$. The  subrack generated by $S$ in $R$, denoted by $\ll S\gg$, is defined to be the intersection of all subracks of $R$ containing $S$. For two racks $R$ and $R'$, a map $\phi:R\to R'$ is called a rack homomorphism if $\phi(a\tg b)=\phi(a)\tg\phi(b)$, for all $a,b\in R$. A bijective rack homomorphism is called a rack automorphism. For any $a\in R$, $f_a$ is an automorphism, since $f_a(b\tg c)=f_a(b)\tg f_a(c)$, for any $b,c\in R$, by self-distributivity. The set of all automorphisms of  $R$ is denoted by $\mathrm{Aut}(R)$, and is a subgroup of the group of all permutations on $R$. The subgroup generated by the set $\{f_a:\;a\in R\}$ is called the inner group of $R$ and is denoted by $\mathrm{Inn}(R)$. The inner group of $R$ acts on $R$ with the natural action $\phi*a=\phi(a)$, with $\phi\in\mathrm{Inn}(R)$. The orbits of this action are called orbits of $R$. Let $G=\mathrm{Inn}(R)$. We denote the orbit containing $a\in R$ by $Ga$. Moreover, for a subset $S\subseteq R$ and a subgroup $H$ of $\mathrm{Inn}(R)$ we set
\[ HS=\bigcup_{s\in S}Hs.\]
\begin{lem}\label{1}
Let   $R$ be a rack. Then
\begin{enumerate}
\item 
the orbits of $R$ are subracks of $R$, and
\item
if $S\subseteq R$ and $H$ is the subgroup of $\mathrm{Inn}(R)$ generated by $\{f_s:\;s\in S\}$, then $\ll S\gg =HS$.
\end{enumerate}
\end{lem}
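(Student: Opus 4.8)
The plan is to prove both parts by exploiting Lemma~\ref{0}, which tells us how conjugation by $f_a$ interacts with the maps $f_b$. For part~(1), I would fix an orbit $Ga$ where $G=\mathrm{Inn}(R)$, and show it is closed under $\tg$ and satisfies bijectivity. Closure under $\tg$: take $x,y\in Ga$, so $x=\phi(a)$ and $y=\psi(a)$ for some $\phi,\psi\in G$. Then $x\tg y=f_x(y)=f_{\phi(a)}(\psi(a))$. By Lemma~\ref{0}(1) (applied iteratively along a word for $\phi$ in the generators $f_s$, together with the observation following the lemma that inverses behave the same way), $f_{\phi(a)}=\phi f_a\phi^{-1}\in G$, so $x\tg y=\phi f_a\phi^{-1}\psi(a)\in Ga$. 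For bijectivity within $Ga$: given $x,b\in Ga$, the unique $c\in R$ with $x\tg c=b$ is $c=f_x^{-1}(b)$; since $f_x\in G$ (again by Lemma~\ref{0}, as $x\in Ga$ means $f_x$ is a $G$-conjugate of $f_a$), we get $c\in Ga$, and uniqueness is inherited from $R$. Self-distributivity of course restricts from $R$. Hence $Ga$ is a subrack.

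For part~(2), let $H=\langle f_s:s\in S\rangle$ and $HS=\bigcup_{s\in S}Hs$. I would prove $\ll S\gg=HS$ by the usual two inclusions. First, $HS\subseteq\,\ll S\gg$: each orbit $Hs$ under $H$ lies in the $H$-orbit structure, and one checks $Hs\subseteq\,\ll S\gg$ because $\ll S\gg$ is a subrack containing $s$, hence closed under the maps $f_t$ and $f_t^{-1}$ for every $t\in\,\ll S\gg$; since $S\subseteq\,\ll S\gg$, all generators of $H$ act within $\ll S\gg$, so $Hs\subseteq\,\ll S\gg$ for each $s\in S$. Conversely, to show $HS$ is itself a subrack containing $S$ (which forces $\ll S\gg\subseteq HS$), I note $S\subseteq HS$ trivially (take the identity in $H$). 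For closure under $\tg$: pick $x=h_1 s_1$, $y=h_2 s_2$ with $h_i\in H$, $s_i\in S$. Then $x\tg y=f_{h_1 s_1}(h_2 s_2)=h_1 f_{s_1}h_1^{-1}h_2 s_2$, using Lemma~\ref{0} exactly as in part~(1); since $h_1 f_{s_1}h_1^{-1}h_2\in H$, this lies in $Hs_2\subseteq HS$. For bijectivity: given $x=h_1s_1\in HS$ and target $b=h_2s_2\in HS$, the solution $c=f_x^{-1}(b)=h_1 f_{s_1}^{-1}h_1^{-1}h_2 s_2\in Hs_2\subseteq HS$, with uniqueness inherited from $R$.

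The one point requiring a little care — and the likely main obstacle — is justifying that $f_x\in H$ whenever $x\in HS$ (and likewise $f_x\in G$ for $x\in Ga$ in part~(1)). This is not immediate from Lemma~\ref{0} in one stroke; rather, if $x=h\cdot s$ with $h=f_{t_k}^{\pm1}\cdots f_{t_1}^{\pm1}$ a word in the generators, one argues by induction on $k$ using Lemma~\ref{0}(1)--(2) and the remark that $f_{f_a(b)}^{-1}=f_af_b^{-1}f_a^{-1}$, to peel off one generator at a time and conclude $f_x=h f_s h^{-1}$. Once this conjugation identity $f_{hs}=hf_sh^{-1}$ is in hand, everything else is the routine verification above. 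I would state this conjugation fact as a small internal claim at the start of the proof and then use it freely in both parts.
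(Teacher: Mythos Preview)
Your proposal is correct and follows essentially the same route as the paper: both arguments reduce everything to the conjugation identity $f_{h\cdot s}=hf_sh^{-1}$ obtained by iterating Lemma~\ref{0}, and then verify closure and bijectivity of $Ga$ and $HS$ directly. The paper writes out the words $f_{a_1}^{\epsilon_1}\cdots f_{a_t}^{\epsilon_t}$ explicitly where you phrase it as an induction on word length, but the content is identical; one cosmetic simplification is that in part~(1) you do not need Lemma~\ref{0} to see $f_x\in G$, since $G=\mathrm{Inn}(R)$ already contains every $f_x$ by definition.
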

\begin{proof}
(1) Let $a\in R$ and $G=\mathrm{Inn}(R)$. We show that $Ga$ is a subrack of $R$. Let $x=f_{a_1}^{\epsilon_1}f_{a_2}^{\epsilon_2}\cdots f_{a_t}^{\epsilon_t}(a)$ and $y=f_{b_1}^{\epsilon_1'}f_{b_2}^{\epsilon_2'}\cdots f_{b_l}^{\epsilon_l'}(a)$ be two arbitrary elements of $Ga$ for which $\epsilon_i$ and $\epsilon_j'$ are 1 or $-1$, for all $i,j$. Then by Lemma \ref{0} we have
\begin{align*}
x\tg y&=f_{f_{a_1}^{\epsilon_1}f_{a_2}^{\epsilon_2}\cdots f_{a_t}^{\epsilon_t}(a)}f_{b_1}^{\epsilon_1'}f_{b_2}^{\epsilon_2'}\cdots f_{b_l}^{\epsilon_l'}(a)
=f_{a_1}^{\epsilon_1}\cdots f_{a_t}^{\epsilon_t}f_af_{a_t}^{-\epsilon_t}\cdots f_{a_1}^{-\epsilon_1}f_{b_1}^{\epsilon_1'}\cdots f_{b_l}^{\epsilon_l'}(a),
\end{align*}
which implies that $x\tg y\in Ga$. Now, it is enough  to prove that for all $x,y\in Ga$, there exists an element $z\in Ga$ for which $f_x(z)=y$. \\
Let $x=f_{a_1}^{\epsilon_1}f_{a_2}^{\epsilon_2}\cdots f_{a_t}^{\epsilon_t}(a)$ and $y=f_{b_1}^{\epsilon_1'}f_{b_2}^{\epsilon_2'}\cdots f_{b_l}^{\epsilon_l'}(a)$. Then  
\[ z= f_{a_1}^{\epsilon_1}\cdots f_{a_t}^{\epsilon_t}f_a^{-1}f_{a_t}^{-\epsilon_t}\cdots f_{a_1}^{-\epsilon_1}f_{b_1}^{\epsilon_1'}f_{b_2}^{\epsilon_2'}\cdots f_{b_l}^{\epsilon_l'}(a)\]
is an element of $Ga$ and $x\tg z=y$. Therefore $Ga$ is a subrack of $R$.\\
(2) Let $x=f_{a_1}^{\epsilon_1}f_{a_2}^{\epsilon_2}\cdots f_{a_t}^{\epsilon_t}(s_1)$ and $y=f_{b_1}^{\epsilon_1'}f_{b_2}^{\epsilon_2'}\cdots f_{b_l}^{\epsilon_l'}(s_2)$ be two elements of $HS$. Then we have
\[x\tg y=f_{a_1}^{\epsilon_1}\cdots f_{a_t}^{\epsilon_t}f_{s_1}f_{a_t}^{-\epsilon_t}\cdots f_{a_1}^{-\epsilon_1}f_{b_1}^{\epsilon_1'}\cdots f_{b_l}^{\epsilon_l'}(s_2),\]
and hence $x\tg y\in HS$. Moreover, if 
\[ z= f_{a_1}^{\epsilon_1}\cdots f_{a_t}^{\epsilon_t}f_{s_1}^{-1}f_{a_t}^{-\epsilon_t}\cdots f_{a_1}^{-\epsilon_1}f_{b_1}^{\epsilon_1'}f_{b_2}^{\epsilon_2'}\cdots f_{b_l}^{\epsilon_l'}(s_2),\]
then $x\tg z=y$. Therefore $HS$ is a subrack of $R$, and hence $\ll S\gg\subseteq HS$. The other inclusion, follows easily from the definition of $HS$.
\end{proof}

The following theorem plays a key role in our main result.
\begin{thm}\label{2}
Let $R$ be a rack and $a\in R$. Then
\begin{enumerate}
\item
$\ll a\gg=\{f_a^n(a):\;n\in\mathbb{Z}\}$, and 
\item
if $Q$ is a subrack of $R$ such that $Q\cap \ll a\gg\neq\emptyset$, then $\ll a\gg\subseteq Q$. 
\end{enumerate}
\end{thm}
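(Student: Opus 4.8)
The plan is to prove both parts by exploiting the special structure of $\ll a\gg$ as described in Lemma~\ref{1}(2), namely that $\ll a\gg = Ha$ where $H = \langle f_a\rangle$ is the cyclic subgroup of $\mathrm{Inn}(R)$ generated by $f_a$. Since $H = \{f_a^n : n\in\mathbb{Z}\}$, part (1) should follow almost immediately: $\ll a\gg = Ha = \{f_a^n(a) : n\in\mathbb{Z}\}$. So the first step is just to invoke Lemma~\ref{1}(2) with $S = \{a\}$.

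For part (2), let $Q$ be a subrack with $Q\cap\ll a\gg\neq\emptyset$, so there exists $m\in\mathbb{Z}$ with $f_a^m(a)\in Q$. I want to show every $f_a^n(a)$ lies in $Q$. The key observation is that $f_a^m(a) \tg f_a^m(a)$ can be computed using Lemma~\ref{0}: since $f_{f_a^m(a)} = f_a^m f_a f_a^{-m} = f_a$ (because powers of $f_a$ commute), we get $f_a^m(a)\tg x = f_a(x)$ for all $x$. In particular, applying this with $x = f_a^m(a)\in Q$ and using that $Q$ is closed under $\tg$, we obtain $f_a(f_a^m(a)) = f_a^{m+1}(a)\in Q$. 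Iterating, $f_a^n(a)\in Q$ for all $n\geq m$. For the downward direction, I use the bijectivity axiom inside $Q$: since $f_a^m(a)\tg z = f_a(z)$ as shown, and the equation $f_a^m(a)\tg z = f_a^m(a)$ has a unique solution $z$ in $Q$ (by the rack axiom applied to the subrack $Q$), that solution must be $z = f_a^{m-1}(a)$, which therefore lies in $Q$. Iterating downward gives $f_a^n(a)\in Q$ for all $n\leq m$ as well, so $\ll a\gg\subseteq Q$.

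The main point requiring care is the identity $f_{f_a^m(a)} = f_a$, which follows from Lemma~\ref{0}(1)--(2) by induction on $|m|$ (or directly: $f_{f_a^m(a)} = f_a^m f_a f_a^{-m} = f_a$ since all these maps are powers of $f_a$ and hence commute). Once that is in hand, everything reduces to the single statement that on the set $\ll a\gg$ the operation $\tg$ agrees with applying $f_a$, so $\ll a\gg$ together with $\tg$ is really just the cyclic structure generated by the permutation $f_a$ acting on the orbit of $a$. I do not expect a serious obstacle here; the only subtlety is to make sure the downward step genuinely uses the bijectivity axiom \emph{within} $Q$ rather than within $R$, since we need the preimage to land in $Q$.

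\begin{proof}
(1) By Lemma~\ref{1}(2) applied to $S=\{a\}$, we have $\ll a\gg = Ha$ where $H$ is the subgroup of $\mathrm{Inn}(R)$ generated by $f_a$, that is, $H=\{f_a^n:\;n\in\mathbb{Z}\}$. Hence $\ll a\gg = Ha = \{f_a^n(a):\;n\in\mathbb{Z}\}$.

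First we record that for every $m\in\mathbb{Z}$,
\[ f_{f_a^m(a)} = f_a. \]
Indeed, by repeated use of Lemma~\ref{0} we obtain $f_{f_a^m(a)} = f_a^m f_a f_a^{-m}$, and since all the maps involved are powers of $f_a$, they commute, so $f_a^m f_a f_a^{-m} = f_a$. Equivalently, $f_a^m(a)\tg x = f_a(x)$ for all $x\in R$.

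(2) Suppose $Q$ is a subrack of $R$ with $Q\cap\ll a\gg\neq\emptyset$, and choose $m\in\mathbb{Z}$ with $f_a^m(a)\in Q$. We claim $f_a^n(a)\in Q$ for all $n\in\mathbb{Z}$, which by part (1) gives $\ll a\gg\subseteq Q$.

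Since $Q$ is closed under $\tg$ and $f_a^m(a)\in Q$, the displayed identity above gives
\[ f_a^{m+1}(a) = f_a\bigl(f_a^m(a)\bigr) = f_a^m(a)\tg f_a^m(a)\in Q. \]
Iterating, $f_a^n(a)\in Q$ for all $n\ge m$.

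For $n<m$ we argue downward. Since $(Q,\tg)$ is a rack and $f_a^m(a)\in Q$, there is a unique $z\in Q$ with $f_a^m(a)\tg z = f_a^m(a)$. But by the displayed identity, $f_a^m(a)\tg f_a^{m-1}(a) = f_a\bigl(f_a^{m-1}(a)\bigr) = f_a^m(a)$, so by uniqueness $z = f_a^{m-1}(a)$; hence $f_a^{m-1}(a)\in Q$. Iterating this argument, $f_a^n(a)\in Q$ for all $n\le m$ as well. Therefore $f_a^n(a)\in Q$ for every $n\in\mathbb{Z}$, and so $\ll a\gg\subseteq Q$.
\end{proof}
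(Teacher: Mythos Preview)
Your proof is correct, and part~(1) is identical to the paper's. For part~(2) both you and the paper rely on the same key identity $f_{f_a^m(a)}=f_a$ obtained from Lemma~\ref{0}, but you package the conclusion differently: the paper observes that this identity forces $\ll f_a^{n_0}(a)\gg=\ll a\gg$ (by reapplying part~(1) to the element $f_a^{n_0}(a)$), and then simply invokes $\ll f_a^{n_0}(a)\gg\subseteq Q$ since $f_a^{n_0}(a)\in Q$. Your version instead inducts explicitly, going upward via closure under $\tg$ and downward via the bijectivity axiom (existence in $Q$, uniqueness in $R$). Both are valid; the paper's route is a bit cleaner because it avoids the up--down induction and makes the statement ``$\ll a\gg$ is an atom'' more transparent, while your route has the virtue of being self-contained and not reinvoking part~(1). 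One small remark: in your downward step, the ``uniqueness'' you appeal to is really uniqueness in $R$ (since a~priori you do not know $f_a^{m-1}(a)\in Q$); this is exactly right, but it would be worth saying so explicitly.
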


\begin{proof}
(1) It follows from Lemma \ref{1} that $\ll a\gg=Ha$ where $H$ is the subgroup of $\mathrm{Inn}(R)$ generated by $f_a$. Thus $H=\{f_a^n:\;n\in\mathbb{Z}\}$, and hence $\ll a\gg=\{f_a^n(a):\;n\in\mathbb{Z}\}$.\\
(2) Let $Q$ be a subrack of $R$ with $Q\cap \ll a\gg\neq\emptyset$, and let $f_a^{n_0}(a)\in Q\cap \ll a\gg$, for some $n_0\in\mathbb{Z}$. We show that $\ll f_a^{n_0}(a)\gg=\ll a\gg$. For $n_0=0$, there is  nothing to prove. Let $n_0\neq 0$. By Lemma \ref{1} we have $\ll f_a^{n_0}(a)\gg=H f_a^{n_0}(a)$ such that $H$ is the subgroup of $\mathrm{Inn}(R)$ generated by $ f_{f_a^{n_0}(a)}$. By Lemma \ref{0} we have 
\[ f_{f_a^{n_0}(a)}=f_{f_a^{\epsilon|n_0|}(a)}=\underbrace{f_a^{\epsilon}\cdots f_a^{\epsilon}}_{|n_0| \text{time(s)}}f_a\underbrace{f_a^{-\epsilon}\cdots f_a^{-\epsilon}}_{|n_0| \text{time(s)}}=f_a,\]
such that $\epsilon=\frac{n_0}{|n_0|}$. Thus $H$ is the cyclic subgroup of  $\mathrm{Inn}(R)$ generated by $f_a$. Consequently,
\[\ll f_a^{n_0}(a)\gg=\{f_a^{n+n_0}(a):\;n\in\mathbb{Z}\}=\{f_a^n(a):\;n\in\mathbb{Z}\}=\ll a\gg.\]
Finally, $\ll a\gg=\ll f_a^{n_0}(a)\gg\subseteq Q$,  since $f_a^{n_0}(a)\in Q$. 
\end{proof}
Let $R$ be a rack. For any $a,b\in R$, we define: $a\sim b$ if and only if $\ll a\gg=\ll b\gg$. It is clear that this is an equivalence relation on $R$. We denote the desired equivalence classes by $\oo{a}$, for all $a\in R$. It follows from Theorem \ref{2} that $\oo{a}=\ll a\gg$, for any $a\in R$. For any $A\subseteq R$, let $\oo{A}=\{\oo{a}:\;a\in A\}$. We also define the binary operation $*$ on $\oo{R}$ such that $\oo{a}*\oo{b}=\oo{a\tg b}$, for any $a,b\in R$. Using the aforementioned notation, we have the following theorem:
\begin{thm}
Let $R$ be a rack. Then $(\oo{R},*)$ is a quandle.
\end{thm}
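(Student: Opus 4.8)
The plan is to verify the three defining conditions directly, with the main work being to check that the operation $*$ on $\oo{R}$ is well defined. First I would establish well-definedness: I need to show that if $\oo{a}=\oo{a'}$ and $\oo{b}=\oo{b'}$, then $\oo{a\tg b}=\oo{a'\tg b'}$. By Theorem \ref{2}(1), $\oo{a}=\oo{a'}$ means $a'=f_a^{m}(a)$ for some $m\in\mathbb Z$ and $a=f_{a'}^{m'}(a')$; similarly $b'=f_b^{n}(b)$ for some $n\in\mathbb Z$. Using Lemma \ref{0}, $f_{a'}=f_{f_a^m(a)}=f_a f_a^m f_a f_a^{-m} f_a^{-1}=f_a$ (after cancellation), so $f_{a'}=f_a$; hence $a'\tg(b') = f_a(b')=f_a f_b^n(b)$. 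On the other hand $a\tg b=f_a(b)$, and by Lemma \ref{0} again $f_b=f_{b'}$, so $f_{a\tg b}=f_{f_a(b)}=f_a f_b f_a^{-1}$ and $f_{a'\tg b'}=f_{f_a f_b^n(b)}$. I would then show $\oo{a'\tg b'}=\{f_{a'\tg b'}^k(a'\tg b'):k\in\mathbb Z\}$ and $\oo{a\tg b}=\{f_{a\tg b}^k(a\tg b):k\in\mathbb Z\}$ coincide, by checking that $a'\tg b'=f_a f_b^n(b)$ lies in $\oo{a\tg b}=\oo{f_a(b)}$. The cleanest route: $\oo{f_a(b)}=\ll f_a(b)\gg$, and since $f_a$ is a rack automorphism it maps $\ll b\gg$ onto $\ll f_a(b)\gg$; as $b'=f_b^n(b)\in\ll b\gg$, we get $f_a(b')\in\ll f_a(b)\gg$, i.e. $a'\tg b'=f_a(b')\in\oo{a\tg b}$, hence $\oo{a'\tg b'}\subseteq\oo{a\tg b}$, and symmetry gives equality. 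This is the step I expect to be the main obstacle, mostly in organizing the automorphism/Lemma \ref{0} bookkeeping so the argument stays clean.

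Once $*$ is well defined, self-distributivity is immediate: for $a,b,c\in R$,
\[
\oo{a}*(\oo{b}*\oo{c}) = \oo{a}*\oo{b\tg c} = \oo{a\tg(b\tg c)} = \oo{(a\tg b)\tg(a\tg c)} = \oo{a\tg b}*\oo{a\tg c} = (\oo{a}*\oo{b})*(\oo{a}*\oo{c}),
\]
using self-distributivity in $R$.

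For bijectivity, given $\oo a,\oo b\in\oo R$, I would use bijectivity in $R$ to pick the unique $c\in R$ with $a\tg c=b$, so that $\oo a*\oo c=\oo{a\tg c}=\oo b$; this gives existence of a solution. For uniqueness, suppose $\oo a*\oo{c_1}=\oo a*\oo{c_2}$, i.e. $\oo{a\tg c_1}=\oo{a\tg c_2}$, equivalently $\ll a\tg c_1\gg=\ll a\tg c_2\gg$. Since $f_a$ is an automorphism carrying $\ll c_i\gg$ onto $\ll a\tg c_i\gg=\ll f_a(c_i)\gg$, applying $f_a^{-1}$ yields $\ll c_1\gg=\ll c_2\gg$, i.e. $\oo{c_1}=\oo{c_2}$. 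Finally, the quandle axiom $\oo a*\oo a=\oo a$ holds because $a\tg a=f_a(a)\in\{f_a^n(a):n\in\mathbb Z\}=\ll a\gg=\oo a$, so $\oo{a\tg a}=\oo a$ by Theorem \ref{2}. Hence $(\oo R,*)$ is a quandle.
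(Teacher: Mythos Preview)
Your argument is correct and, if anything, conceptually cleaner than the paper's. Both proofs hinge on the same key fact (already proved in Theorem~\ref{2}(2), and recorded in the paper as equation~\eqref{e}) that $f_{f_a^{n}(a)}=f_a$ for all $n\in\mathbb Z$, so that representatives in the same class $\oo a$ act identically. From there the paper proceeds by explicit index computations: it writes $x=f_a^{n}(a)$, $y=f_b^{m}(b)$ and shows inductively that $x\tg y=f_{f_a(b)}^{m}(f_a(b))\in\oo{a\tg b}$, and similarly handles uniqueness in the bijectivity axiom by computing that any solution $x$ of $\oo{a}*\oo{x}=\oo{b}$ satisfies $x=f_c^{k}(c)$ for $c=f_a^{-1}(b)$. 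You instead invoke once and for all that each $f_a$ is a rack automorphism and hence carries $\ll b\gg$ onto $\ll f_a(b)\gg$; well-definedness and injectivity of $f_{\oo a}$ then follow immediately by transporting generated subracks through $f_a$ and $f_a^{-1}$. This replaces the paper's two inductive computations by a single structural observation, at the cost of relying on the (easy) fact that automorphisms preserve generated subracks.

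Two minor remarks. First, your displayed identity $f_{f_a^{m}(a)}=f_a f_a^{m} f_a f_a^{-m} f_a^{-1}$ has an extra outer conjugation; iterating Lemma~\ref{0} gives $f_{f_a^{m}(a)}=f_a^{m} f_a f_a^{-m}=f_a$ directly (or simply cite Theorem~\ref{2}(2)). Second, once you have $a'\tg b'\in\oo{a\tg b}$ you are done: equivalence classes are disjoint or equal, so $\oo{a'\tg b'}=\oo{a\tg b}$ without any appeal to symmetry.
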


\begin{proof}
First, note that in the proof of  Theorem \ref{2}, we  proved that for any integer $m$ and $a\in R$, we have 
\begin{equation}\label{e}
f_{f_a^n(a)}^m=f_a^m. 
\end{equation}
Now, we show that the operation $*$ is well-defined. Let $a,b\in R$,  $x\in\oo{a}$ and $y\in\oo{b}$. Thus $x=f_a^n(a)$ and $y=f_b^m(b)$ for some integers $m,n$. First, assume that $m=0$. It follows from  \ref{e} that
\[x\tg y=f_a^n(a)\tg b=f_{f_a^n(a)}(b)=a\tg b.\]
Now, assume that $m\neq 0$. By  \ref{e}, we have
\begin{align*}
x\tg y&= f_a^n(a)\tg f_b^m(b)=a\tg f_b^m(b)=f_af_b^{\epsilon|m|}(b)=f_af_b^{\epsilon}f_a^{-1}f_af_b^{\epsilon(|m|-1)}(b)\\
&=f_{f_a(b)}^{\epsilon}\left(f_af_b^{\epsilon(|m|-1)}(b)\right),
\end{align*}
where $\epsilon= \frac{m}{|m|}$. Therefore by  induction, we have $x\tg y=f_{f_a(b)}^{\epsilon |m|}(f_a(b))$, and hence $\oo{x\tg y}=\oo{a\tg b}$.
Therefore $*$ is well-defined.

Now, we show that $(\oo{R}, *)$ is a quandle. The self-distributivity condition is inherited from $(R,\tg)$. Let $c=f_a^{-1}(b)$. To show bijectivity condition, first note that we have  $\oo{a}*\oo{c}=\oo{a\tg c}=\oo{b}$.
To prove uniqueness of $\oo{c}$, let $x\in R$ with $\oo{a}*\oo{x}=\oo{b}$. Then $\oo{a\tg x}=\oo{b}$, and hence $f_a(x)=f_b^k(b)$ for some integer $k$. This implies that $x=f_a^{-1}f_b^k(b)$. Therefore, similar to the proof of well-definedness of $*$, we have the following: 
\[x=\left\{\begin{array}{ll}f_{c}^{\epsilon |k|}(c)&,\text{if }k\neq 0,\\ c&,\text{if } k=0,\end{array}\right.\]
where $\epsilon= \frac{k}{|k|}$. Therefore $\oo{x}=\oo{c}$. This completes the proof of bijectivity condition. Finally, the binary operation $*$ satisfies the quandle condition. Indeed, we have $\oo{a}*\oo{a}=\oo{a\tg a}=\oo{a}$.
\end{proof}
For a rack $R$, we refer to the quandle $\oo{R}$ as \textit{ the corresponding quandle} of $R$. 

Now, we are ready to answer Question \ref{Q} as one of our main results.
\begin{cor}
The lattice of subracks of a rack is atomic. 
\end{cor}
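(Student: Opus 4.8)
The plan is to identify the atoms of $\mathcal{R}(R)$ explicitly and then observe that every subrack is already the set-theoretic union of the atoms it contains. The least element of $\mathcal{R}(R)$ is the empty subrack $\emptyset$, so an atom is a minimal nonempty subrack. I claim these are exactly the subracks $\ll a\gg$ with $a\in R$. Indeed, by Theorem~\ref{2}(1) each $\ll a\gg=\{f_a^n(a):n\in\mathbb{Z}\}$ is a nonempty subrack. If $\emptyset\neq T\subseteq\ll a\gg$ is a subrack, pick $b\in T$; then $T\supseteq\ll b\gg$, while $\ll b\gg$ meets $\ll a\gg$ (in $b$), so Theorem~\ref{2}(2) applied to the subrack $\ll b\gg$ gives $\ll a\gg\subseteq\ll b\gg\subseteq T\subseteq\ll a\gg$, forcing $T=\ll a\gg$. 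Hence $\ll a\gg$ is an atom. Conversely, if $A$ is any atom and $a\in A$, then $\emptyset\neq\ll a\gg\subseteq A$, so minimality gives $A=\ll a\gg$; thus the atoms of $\mathcal{R}(R)$ are precisely the subracks $\ll a\gg$.

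Next I would verify that an arbitrary subrack $Q$ is the join in $\mathcal{R}(R)$ of the atoms contained in it. For each $a\in Q$ we have $a\in\ll a\gg$ and, since $Q$ is a subrack containing $a$, also $\ll a\gg\subseteq Q$ (this is again Theorem~\ref{2}(2)); therefore $Q=\bigcup_{a\in Q}\ll a\gg$. This union is itself a subrack, namely $Q$, so it is already the smallest subrack containing every $\ll a\gg$ with $a\in Q$. In other words, the join $\bigvee_{a\in Q}\ll a\gg$ computed in the lattice $\mathcal{R}(R)$ equals $Q$, and this join ranges over atoms of $\mathcal{R}(R)$. The remaining case $Q=\emptyset$ is the empty join, which is the least element $\emptyset$. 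Hence every element of $\mathcal{R}(R)$ is the join of its atoms, so $\mathcal{R}(R)$ is atomic.

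I do not expect a genuine obstacle here: all the substance is already contained in Theorem~\ref{2}, whose clause (2) is precisely the ``all-or-nothing'' property that makes each $\ll a\gg$ minimal and, at the same time, forces $\ll a\gg$ to lie inside every subrack it touches. The only mild points of care are the two bookkeeping observations above — that an atom is necessarily of the form $\ll a\gg$, and that $\bigcup_{a\in Q}\ll a\gg$ is a subrack (so that the lattice join collapses to an ordinary union) — both of which are immediate.
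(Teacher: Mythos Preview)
Your proof is correct and follows essentially the same approach as the paper: both identify the atoms as the subracks $\ll a\gg$ via Theorem~\ref{2} and then write an arbitrary subrack $Q$ as the join of these atoms. Your version is simply more explicit, spelling out why each $\ll a\gg$ is minimal and why the lattice join collapses to the set-theoretic union, whereas the paper compresses this into two sentences and records the join as $Q=\bigvee_{\oo{a}\in\oo{Q}}\ll a\gg$.
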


\begin{proof}
Let $R$ be a rack. It follows from Theorem \ref{2} that the set of atoms of $\mathcal{R}(R)$ consists of subracks $\ll a\gg$, for all $a\in R$.  Moreover, for any subrack $Q$  of $R$, we have 
\[ Q=\bigvee_{\oo{a}\in\oo{Q}}\ll a\gg.\]
\end{proof}
The following corollary shows that the lattice of subracks of $R$ and $\oo{R}$ are indeed similar. For this purpose, we use this fact that for any homomorphism $\phi:R\to S$ of racks, the image of any subrack of $R$, and the pre-image of any subrack of $S$, are subracks of $S$ and $R$, respectively. Moreover, any subrack of $S$ is the image of a subrack of $R$, whenever $\phi$ is surjective.

\begin{cor} 
Let $(R,\tg)$ be a rack and $(\oo{R},*)$ be its corresponding quandle. Then the map $Q\mapsto\oo{Q}$ defines an isomorphism from $\mathcal{R}(R)$ to $\mathcal{R}(\oo{R})$.
\end{cor}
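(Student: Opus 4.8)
The plan is to realize the assignment $Q\mapsto\oo Q$ as the map induced on subracks by the canonical projection $\pi\colon R\to\oo R$, $a\mapsto\oo a$, and then to show that $\pi$ is a surjective rack homomorphism whose fibres are precisely the atoms $\ll a\gg$. Well-definedness of $*$ already says $\pi(a\tg b)=\oo{a\tg b}=\oo a*\oo b=\pi(a)*\pi(b)$ for all $a,b\in R$, so $\pi$ is a surjective homomorphism; consequently $\oo Q=\pi(Q)$ is a subrack of $\oo R$ for every subrack $Q$ of $R$ (images of subracks are subracks) and $\pi^{-1}(T)$ is a subrack of $R$ for every subrack $T$ of $\oo R$ (pre-images of subracks are subracks). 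In particular the map $Q\mapsto\oo Q$ is well-defined, and it is visibly order-preserving.

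The crucial step I would isolate is the saturation identity $\pi^{-1}(\pi(Q))=Q$ for every subrack $Q$ of $R$. Since $\pi^{-1}(\pi(Q))=\bigcup_{a\in Q}\oo a=\bigcup_{a\in Q}\ll a\gg$, the inclusion $Q\subseteq\pi^{-1}(\pi(Q))$ is clear, and the reverse inclusion is exactly Theorem~\ref{2}(2): for each $a\in Q$ one has $a\in Q\cap\ll a\gg$, hence $\ll a\gg\subseteq Q$. This ``all-or-nothing'' behaviour of the atoms is the heart of the proof --- it is the only place where the specific structure of $\oo R$ (via Theorem~\ref{2}) is used --- and I expect it to be the main obstacle, the rest being formal.

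Granting the saturation identity, the remaining verifications are immediate. Injectivity: if $\oo{Q_1}=\oo{Q_2}$ then $Q_1=\pi^{-1}(\oo{Q_1})=\pi^{-1}(\oo{Q_2})=Q_2$. Surjectivity: given a subrack $T$ of $\oo R$, the subrack $Q=\pi^{-1}(T)$ of $R$ satisfies $\oo Q=\pi(\pi^{-1}(T))=T$ because $\pi$ is onto. Order-reflecting: if $\oo{Q_1}\subseteq\oo{Q_2}$ then $Q_1=\pi^{-1}(\oo{Q_1})\subseteq\pi^{-1}(\oo{Q_2})=Q_2$. Thus $Q\mapsto\oo Q$ is an order isomorphism between $\mathcal{R}(R)$ and $\mathcal{R}(\oo R)$, and since an order isomorphism of lattices automatically preserves meets and joins, it is a lattice isomorphism.
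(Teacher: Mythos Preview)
Your proof is correct and follows essentially the same approach as the paper: both use the canonical surjection $\pi\colon R\to\oo R$, note that images and pre-images of subracks under $\pi$ are subracks, and rely on the fact (from Theorem~\ref{2}(2)) that every subrack of $R$ is a union of full equivalence classes. Your version is slightly more explicit in isolating the saturation identity $\pi^{-1}(\pi(Q))=Q$ and in verifying that the bijection is order-reflecting, whereas the paper argues injectivity directly by element-chasing and leaves the order-isomorphism aspect implicit.
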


\begin{proof}
We have the natural surjective homomorphism $\pi: R\to \oo{R}$ which sends an element $a\in R$ to $\oo{a}\in \oo{R}$. Therefore  for any subrack $Q$ of $\oo{R}$, the set $\oo{Q}$ is a subrack of $\oo{R}$. Moreover, any subrack of $\oo{R}$ is of the form of $\oo{Q}$, for some subrack $Q$ of $R$. To prove that this map is injective, assume that $\oo{Q}=\oo{Q'}$, for two subracks $Q$ and $Q'$ of $R$. For any $x\in Q$, we have $\oo{x}\in\oo{Q}$, and hence there exists an element $x'\in Q'$ with $\oo{x}=\oo{x'}$. Given that $x'\in Q'$, we conclude that $\oo{x}\subseteq Q'$, and hence $x\in Q'$. Thus we obtain $Q\subseteq Q'$. We can conclude that $Q'\subseteq Q$ in a similar way. Therefore $Q=Q'$ and the map is injective. 
\end{proof}
Note that  the above relationship between a rack $R$  and its corresponding quandle reduces the study of the lattice of subracks of $R$ to the quandle's. In \cite{Bri}, a certain quandle was associated to a rack whose lattice of subracks is not isomorphic to the one for $R$. In the following, we  discuss this correspondence. Let $(R,\tg)$ be a rack and $\iota:R\to R$ be defined by $\iota(a)=f_a^{-1}(a)$.  We show that $\iota$ is an isomorphism of racks. Let $a,b\in R$. It follows from self-distributivity condition that \[a\tg(\iota(a)\tg\iota(b))=(a\tg\iota(a))\tg(a\tg\iota(b))=a\tg(a\tg\iota(b)).\] 
Now, bijectivity condition of $(R,\tg)$ guarantees that $\iota(a)\tg\iota(b)=a\tg\iota(b)$. Moreover, we have
\[(a\tg b)\tg(a\tg\iota(b))=a\tg(b\tg\iota(b))=a\tg b.\] 
Therefore $\iota(a\tg b)=a\tg\iota(b)$, and hence $\iota(a\tg b)=a\tg\iota(b)=\iota(a)\tg\iota(b)$. To show that $\iota$ is injective, assume that $\iota(a)=\iota(b)$. Thus, we have \[a=a\tg\iota(a)=\iota(a)\tg\iota(a)=\iota(b)\tg\iota(b)=b\tg\iota(b)=b.\] 
It follows from  $a=a\tg\iota(a)=\iota(a\tg a)$   that $\iota$ is surjective, and hence $\iota\in \mathrm{Aut}(R)$. Now, we can consider $R$ together with the binary operation 
\[a\tgl b=a\tg \iota(b)=\iota(a)\tg \iota(b),\quad \text{for all }a,b\in R. \]
We show that $(R,\tgl)$ is a quandle. The quandle condition follows from $a\tgl a=a\tg\iota(a)=a$. Note that we have the following
\[a\tgl f_a^{-1}(\iota^{-1}(b))=\iota(a\tg f_a^{-1}(\iota^{-1}(b)))=b.\] 
Moreover, it follows from $a\tgl c=b$ that $\iota(a\tg c)=b$, and hence $a\tg c=\iota^{-1}(b)$. Consequently, we have $c=f_a^{-1}(\iota^{-1}(b))$. Therefore bijectivity condition is satisfied. Self-distributivity condition is obtained as follows:
\begin{align*} 
a\tgl(b\tgl c)&=a\tgl(b\tg\iota(c))=a\tg(\iota(b)\tg\iota^2(c))=(a\tg\iota(b))\tg(a\tg\iota^2(c))\\
&=(a\tgl b)\tg(\iota(a)\tg\iota^2(c))=(a\tgl b)\tgl (a\tg\iota(c))=(a\tgl b)\tgl (a\tgl c).
\end{align*}
 Using the above construction, one could easily see that any subrack of $(R,\tg)$ is  a subrack of $(R,\tgl)$ as well.  But the converse is not true. For instance, if $(R,\tg)$ is the rack defined in Example \ref{ex1}, then $(R,\tgl)$ is the trivial quandle, and hence it has some subracks, like $\{1\}$, which are not subracks of $(R,\tg)$. It follows that  $\mathcal{R}\left((R,\tg)\right)$ is a proper sublattice of the finite  lattice $\mathcal{R}\left((R,\tgl)\right)$, and hence we have $\mathcal{R}\left((R,\tg)\right)\ncong \mathcal{R}\left((R,\tgl)\right)$.\\

As an application of our results, in the following theorem, we characterize all racks $R$ for which $\mathcal{R}(R)$ is distributive. Recall that a lattice $L$ is called distributive, if the following holds:
\[ a\wedge(b\vee c)=(a\wedge b)\vee(a\wedge c),\quad\text{for all }a,b,c\in L.\]
%More precisely, we prove that racks with distributive lattices of subracks are the ones that their associated quandles are the trivial quandles.
\begin{thm}
Let $(R,\tg)$ be a rack. Then $\mathcal{R}(R)$ is distributive  if and only if $(\oo{R},*)$ is the trivial quandle.
\end{thm}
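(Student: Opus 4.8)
The plan is to use the isomorphism $\mathcal{R}(R)\cong\mathcal{R}(\oo{R})$ given by $Q\mapsto\oo{Q}$ (established in the corollary above) to replace $R$ by its corresponding quandle, so that it suffices to prove the statement for a quandle in place of $R$: namely, that for a quandle $\oo{R}$ the lattice $\mathcal{R}(\oo{R})$ is distributive if and only if $\oo{R}$ is trivial. The one fact I would isolate at the outset is that in any quandle $Q$ each singleton $\{x\}$ is a subrack, since the quandle identity $x\tg x=x$ makes both rack conditions hold on $\{x\}$; equivalently, by Theorem~\ref{2} together with $f_x(x)=x$ one gets $\ll x\gg=\{x\}$, so that the atoms of $\mathcal{R}(Q)$ are exactly the singletons.

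For the ``if'' direction, assume $\oo{R}$ is the trivial quandle on the set $X=\oo{R}$. Since $a\tg b=b$ for all $a,b$, \emph{every} subset of $X$ is a subrack, so $\mathcal{R}(\oo{R})$ is the full power-set lattice of $X$, with meet and join being intersection and union. Power-set lattices are distributive, hence so is $\mathcal{R}(R)\cong\mathcal{R}(\oo{R})$.

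For the ``only if'' direction I would argue the contrapositive. Suppose $\oo{R}$ is not trivial, so there are $p,q\in\oo{R}$ with $r:=p*q\neq q$, where $*$ denotes the operation of $\oo{R}$. First I would check that $p,q,r$ are pairwise distinct: if $r=p$ then $f_p(q)=p*q=p=p*p=f_p(p)$, and injectivity of $f_p$ (the bijectivity axiom in the quandle $\oo{R}$) forces $q=p$, whence $r=p*p=p=q$, contradicting $r\neq q$; the case $p=q$ likewise gives $r=q$. Now take the three subracks $\{r\},\{p\},\{q\}$ of $\oo{R}$ (singletons, hence subracks), together with the least element $\emptyset$. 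Since $\ll\{p,q\}\gg$ is a subrack containing $p$ and $q$, it contains $r=p*q$, so
\[\{r\}\wedge\big(\{p\}\vee\{q\}\big)=\{r\}\cap\ll\{p,q\}\gg=\{r\},\]
whereas $\{r\}\cap\{p\}=\emptyset=\{r\}\cap\{q\}$ gives $\big(\{r\}\wedge\{p\}\big)\vee\big(\{r\}\wedge\{q\}\big)=\emptyset\neq\{r\}$. Hence the distributive law fails in $\mathcal{R}(\oo{R})$, and therefore in $\mathcal{R}(R)$.

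I do not expect a genuine obstacle: once $R$ is traded for $\oo{R}$, the whole argument rests on the remark that singletons are subracks of a quandle. The only delicate point is verifying that $p$, $q$ and $r=p*q$ are genuinely distinct, which is precisely what makes the two meets $\{r\}\wedge\{p\}$ and $\{r\}\wedge\{q\}$ collapse to $\emptyset$ while $\{r\}\wedge(\{p\}\vee\{q\})$ stays equal to $\{r\}$, and it is exactly there that the quandle identity $x\tg x=x$, through injectivity of the maps $f_x$, enters.
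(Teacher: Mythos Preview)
Your proof is correct and follows essentially the same approach as the paper's: both directions rest on the same distributive-law computation with three atoms, namely comparing $\oo{c}\wedge(\oo{a}\vee\oo{b})$ with $(\oo{c}\wedge\oo{a})\vee(\oo{c}\wedge\oo{b})$. The only cosmetic differences are that you first pass to $\oo{R}$ via the lattice isomorphism and work with singletons (whereas the paper works directly with the atoms $\oo{a}\subseteq R$), and that you run the converse as a contrapositive, producing a specific triple $p,q,r$ violating distributivity, while the paper argues directly that distributivity forces $\ll a,b\gg=\oo{a}\cup\oo{b}$ and hence $\oo{a}*\oo{b}=\oo{b}$.
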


\begin{proof}
First, suppose that $\oo{R}$ is the trivial quandle. For any $a,b\in R$, we have $f_a(b)=b\in\oo{b}$, and hence $\oo{a}\cup\oo{b}$ is a subrack of $R$. Therefore  subracks of $R$ are arbitrary unions of the atoms of $R$. In particular,  the union of two subracks of $R$ is also a subrack of $R$.  This implies that the join of two subracks of $R$ is the union of them. Consequently  $\mathcal{R}(R)$ is distributive.

Conversely, assume that $\mathcal{R}(R)$ is distributive and $a,b\in R$. For any $c\in R\backslash(\oo{a}\cup\oo{b})$, we have 
\[\oo{c}\wedge\left(\oo{a}\vee\oo{b}\right)=\left(\oo{c}\wedge\oo{a}\right)\vee\left(\oo{c}\wedge\oo{b}\right)
=\left(\oo{c}\cap\oo{a}\right)\vee\left(\oo{c}\cap\oo{b}\right)=\emptyset,\]
and hence $c\notin\ll a, b\gg$. Thus $\ll a,b\gg=\oo{a}\cup\oo{b}$. It follows that $f_a(b)\in\oo{b}$, and hence $\oo{a}*\oo{b}=\oo{b}$. Therefore  $\oo{R}$ is the trivial quandle.
\end{proof}
%Let $R$ be a rack. We have the natural surjective homomorphism $\phi:\mathrm{Inn}(R)\to\mathrm{Inn}(\oo{R})$ for which $\phi(f_a)=f_{\oo{a}}$, for all $a\in R$. 
%\begin{prop}
%Let $R$ be a rack. Then the map $f_a\mapsto f_{\oo{a}}$ defines a surjective homomorphism $\phi$ from %$\mathrm{Inn}(R)$ to $\mathrm{Inn}(\oo{R})$. The kernel of $\phi$ is the generated subgroup by all elements $f_a$ for which $f_a=id$.
%\end{prop}

%\begin{proof}
%Let $x=f_{a_1}^{\epsilon_1}f_{a_2}^{\epsilon_2}\cdots f_{a_t}^{\epsilon_t}$ is an elements of $\mathrm{Ker}(\phi)$, such that $\epsilon_i$'s are $1$ or $-1$.
%\end{proof}
The above theorem implies that the lattice of subracks of the rack defined in Example \ref{ex1} is distributive. For a non-distributive rack we provide a new example of racks which is a generalization of the rack defined  in Example \ref{ex1}. 
\begin{exmp}
Let $R$ be a set and $\{R_i\}_{i\in I}$ be a partition of $R$. Suppose that $\{f_i\}_{i\in I}$ is a family of bijective functions on $R$ such that
\begin{itemize}
\item
$f_i(R_j)=R_j$, and
\item
$f_if_j=f_jf_i$
\end{itemize}
for all $i,j\in I$. We define $a\tg b=f_i(b)$, for all $a\in R_i$ and $b\in R$.
Then we show that $(R,\tg)$ is a rack. To observe self-distributivity condition, let $a,b,c\in R$ with $a\in R_i$ and $b\in R_j$. So, we have 
\[ a\tg(b\tg c)=a\tg f_j(c)=f_if_j(c)=f_jf_i(c)=f_i(b)\tg f_i(c)=(a\tg b)\tg f_i(c)=(a\tg b)\tg(a\tg c).\]
To prove bijectivity condition, let $x$ be an element of $R$ for which $f_i(x)=b$. Therefore   
\[ a\tg x=f_i(x)=b.\]
To prove uniqueness of $x$, assume that $a\tg y=b$ for some $y\in R$.  Then $f_i(y)=b$ which implies $y=f_i^{-1}(b)=x$.\\
As a particular case of this structure, one can consider  $f$  to be a permutation on  $R$ and $f_i=f^i$.
\end{exmp}

\section{Coloring of knot diagrams via $(s,t)$-racks}
We have already introduced a  well known class of examples of racks in  Example~1.2, Part (6), called $(s,t)$-racks. In the following, we determine the atoms of $(s,t)$-racks as well as their corresponding quandles. To do this, we benefit from the next lemma. % In the following, we determine atoms of $(s,t)$-racks, and we determine the quandles corresponding  to them. For our purpose, we provide the following lemma to simplify calculations in $(s,t)$-racks associated to $\mathbb{Z}_n$. 

\begin{lem}\label{calc}
Let  $h\in\mathbb{Z}[t]$ and $g\in\mathbb{Z}[s]$. Suppose that $s,t$ are integers with $s^2=s(1-t)$. Then we have 
\[ h(t)g(s)=h(1-s)\left(g(s)-g_0\right)+g_0h(t)\]
where $g_0$ is the constant coefficient of $g$.
\end{lem}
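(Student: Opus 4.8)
The plan is to prove the identity $h(t)g(s) = h(1-s)\bigl(g(s) - g_0\bigr) + g_0 h(t)$ by reducing it, via linearity in both $h$ and $g$, to the single monomial case and then exploiting the relation $s^2 = s(1-t)$ to absorb every factor of $s$ beyond the first. First I would fix $h$ and observe that both sides of the claimed equality are $\mathbb{Z}$-linear in $g$; writing $g(s) = g_0 + \sum_{k\ge 1} c_k s^k$, the term $g_0$ contributes $g_0 h(t)$ to both sides (on the right via $g(s) - g_0$ vanishing at that part plus the explicit $g_0 h(t)$), so it suffices to treat $g(s) = s^k$ with $k \ge 1$, where $g_0 = 0$ and the target identity becomes simply $h(t)s^k = h(1-s)s^k$.

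The heart of the matter is therefore the claim $h(t)s^k = h(1-s)s^k$ for all $k\ge 1$ and all $h \in \mathbb{Z}[t]$. By linearity in $h$ this reduces further to $h(t) = t^n$, i.e. to showing $t^n s^k = (1-s)^n s^k$ in the ring $\mathbb{Z}[s,t]/\langle s^2 - s(1-t)\rangle$ — equivalently, since we are told $s,t$ are integers satisfying $s^2 = s(1-t)$, the scalar identity $t^n s^k = (1-s)^n s^k$. The key observation is that the defining relation can be rewritten as $s\cdot s = s\cdot(1-t)$, hence $s\cdot t = s\cdot(1-s)$, i.e. $ts = (1-s)s$. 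From this one gets $t s^k = (1-s)s^k$ for every $k\ge 1$ by peeling off one copy of $s$: $t s^k = (ts)s^{k-1} = (1-s)s \cdot s^{k-1} = (1-s)s^k$. Iterating, $t^2 s^k = t(ts^k) = t(1-s)s^k = (1-s)(ts^k) = (1-s)^2 s^k$, and an easy induction on $n$ gives $t^n s^k = (1-s)^n s^k$ for all $n \ge 0$, $k \ge 1$. Summing over the monomials of $h$ yields $h(t)s^k = h(1-s)s^k$, and then summing over the monomials of $g$ (with the constant term handled separately as above) yields the full statement.

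The main obstacle — really the only subtlety — is organizing the bookkeeping so that the constant coefficient $g_0$ of $g$ is handled correctly: one must be careful that the reduction $ts^k = (1-s)s^k$ is valid only for $k \ge 1$, since for $k = 0$ it would read $t = 1-s$, which is false in general. This is exactly why the term $g_0$ (the $k=0$ part) is separated out and reappears untouched as $g_0 h(t)$ on the right-hand side, while $g(s) - g_0 = \sum_{k\ge 1} c_k s^k$ collects precisely the monomials to which the reduction applies. Once that split is made, everything else is a routine two-layer induction (on the exponent in $h$, with the exponent in $g$ fixed and at least one), and no further difficulty arises. I would present the argument by first recording the elementary consequence $t s^k = (1-s) s^k$ for $k\ge 1$, then deducing $t^n s^k = (1-s)^n s^k$ by induction, then extending to general $h$ by linearity, and finally assembling the statement by linearity in $g$ with the constant term treated on its own.
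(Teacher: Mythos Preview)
Your proposal is correct and follows essentially the same approach as the paper: reduce by linearity in $h$ to the case $h(t)=t^i$, separate off the constant term $g_0$ of $g$, and then use the relation $ts=(1-s)s$ (equivalently $s\mid g(s)-g_0$) together with an induction on the exponent of $t$. The only cosmetic difference is that you also linearize in $g$ down to monomials $s^k$, whereas the paper keeps $g$ general and works with the factor $\frac{g(s)-g_0}{s}$ directly; the computations are otherwise identical.
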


\begin{proof}
It is enough to consider polynomials $h(t)$ of the form of $h(t)=t^i$, for some non-negative integer $i$.  For $i=0$, the statement is obvious. Let $i>0$. Since $s|(g(s)-g_0)$, we have
\begin{align*}
 t^ig(s)&=t^i(g(s)-g_0)+g_0t^i=t^{i-1}(ts)\left(\frac{g(s)-g_0}{s}\right)+g_0t^i\\
 &=t^{i-1}(s-s^2)\left(\frac{g(s)-g_0}{s}\right)+g_0t^i=t^{i-1}(1-s)\left(g(s)-g_0\right)+g_0t^i.
 \end{align*}
 Now, by induction, we conclude that  $t^ig(s)=(1-s)^ig(s)+g_0t^i$.
\end{proof}
Let $R=\mathbb{Z}[t,t^{-1},s]/\left<s^2-s(1-t)\right>$ as in Example \ref{st}, and let $M$ be an $(s,t)$-rack, and $a,b\in M$. We know that $f_a(b)=sa+tb$, and $\oo{a}=\{f_a^k(a)\,:\;k\in\mathbb{Z}\}$ (here, for simplicity  we  used $s$ and $t$ instead of $\oo{s}$ and $\oo{t}$, respectively). In the next lemma we determine the elements of the atoms of $M$ in terms of $s,t$.

\begin{lem}\label{f}
Let $M$ be an $(s,t)$-rack. Then
\[ f_a^k(a)=\left\{\begin{array}{ll}\left(t^k+1-(1-s)^k\right)a&,k\ge 0,\\\dfrac{(1-s)^{-k}}{t^{-k}}a&,k<0,\end{array}\right.\]
for all $a\in M$ and integers $k$.
\end{lem}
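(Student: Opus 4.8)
The plan is to compute $f_a^k(a)$ by iterating the operation and simplifying the resulting coefficient in $R=\mathbb{Z}[t,t^{-1},s]/\langle s^2-s(1-t)\rangle$, treating $k\ge 0$ and $k<0$ separately. First note that for any $\lambda\in R$ we have $f_a(\lambda a)=sa+t\lambda a=(s+t\lambda)a$, so if we write $f_a^k(a)=\phi_k\,a$ then $\phi_0=1$ and $\phi_{k+1}=s+t\phi_k$ for $k\ge 0$. An immediate induction solves this recursion: $\phi_k=s(1+t+\cdots+t^{k-1})+t^k$ for all $k\ge 0$ (the sum being empty when $k=0$). Thus the case $k\ge 0$ reduces to proving the identity $s(1+t+\cdots+t^{k-1})=1-(1-s)^k$ in $R$.

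To establish that identity I would invoke Lemma \ref{calc} with $h(t)=1+t+\cdots+t^{k-1}\in\mathbb{Z}[t]$ and $g(s)=s\in\mathbb{Z}[s]$, whose constant coefficient is $g_0=0$ (its proof uses only $s^2=s(1-t)$, so it applies over $R$). The lemma then gives $h(t)g(s)=h(1-s)g(s)$, i.e.
\[ s(1+t+\cdots+t^{k-1})=s\bigl(1+(1-s)+\cdots+(1-s)^{k-1}\bigr). \]
The right-hand side is now handled by the purely formal geometric-series identity $(1-r)(1+r+\cdots+r^{k-1})=1-r^k$ in $\mathbb{Z}[r]$, specialized at $r=1-s$ (so $1-r=s$), which needs no division and yields $1-(1-s)^k$. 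Adding back $t^k$ gives $\phi_k=t^k+1-(1-s)^k$, the asserted formula for $k\ge 0$.

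For $k<0$ I would write $k=-m$ with $m\ge 1$ and first record that $f_a^{-1}(\lambda a)=t^{-1}(\lambda-s)a$, since $f_a\bigl(t^{-1}(c-sa)\bigr)=sa+t\cdot t^{-1}(c-sa)=c$. Writing $f_a^{-m}(a)=\psi_m a$ we then have $\psi_0=1$ and $\psi_{m+1}=t^{-1}(\psi_m-s)$, and I would prove by induction on $m$ that $\psi_m=t^{-m}(1-s)^m$; after clearing the power of $t$, the inductive step is exactly the equality $s(1-s)^m=t^m s$ in $R$. This last relation follows from the defining relation alone: $s(1-s)=s-s^2=s-s(1-t)=st$ in $R$, hence $s(1-s)^{m+1}=\bigl(s(1-s)^m\bigr)(1-s)=t^m s(1-s)=t^{m+1}s$ by induction. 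Since $-k=m$, this gives $f_a^k(a)=t^{-m}(1-s)^m a=\tfrac{(1-s)^{-k}}{t^{-k}}a$, completing the proof.

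All computations here are short; the only non-mechanical steps are recognizing that Lemma \ref{calc} converts the geometric sum in $t$ into a geometric sum in $1-s$, after which an identity over $\mathbb{Z}$ finishes the non-negative case, and isolating the single relation $s(1-s)=st$ that drives the negative-exponent induction. I do not expect any serious obstacle beyond keeping the two recursions straight.
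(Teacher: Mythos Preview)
Your proof is correct and follows essentially the same approach as the paper: both cases are handled by induction on $k$, with Lemma~\ref{calc} (or, in your negative case, the equivalent bare relation $s(1-s)=st$) used to trade powers of $t$ for powers of $1-s$. The only organizational difference is that for $k\ge 0$ you first solve the recursion to the geometric sum $\phi_k=t^k+s\sum_{i<k}t^i$ and then simplify, whereas the paper inducts directly on the closed form $t^k+1-(1-s)^k$; for $k<0$ the two arguments are virtually identical.
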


\begin{proof}
The statement is clear for $k=0$. Let $k\ge 0$. By induction and Lemma \ref{calc}, it follows that
\begin{align*}
f_a^{k+1}(a)&=f_a\left(f_a^k(a)\right)=f_a\left(\left(t^k+1-(1-s)^k\right)a\right)=sa+t\left(t^k+1-(1-s)^k\right)a\\
&=\left(t^{k+1}+s+t\left(1-(1-s)^k\right)\right)a=\left(t^{k+1}+s+(1-s)\left(1-(1-s)^k\right)\right)a\\
&=\left(t^{k+1}+1-(1-s)^{k+1}\right)a.
\end{align*}
Also, we have  $f_a\left(f_a^{-(k+1)}(a)\right)=f_a^{-k}(a)$, and hence $f_a^{-(k+1)}(a)=t^{-1}\left(f_a^{-k}(a)-sa\right)$. Therefore 
\begin{align*}
f_a^{-(k+1)}(a)&=\frac{1}{t}\left(\frac{(1-s)^k}{t^k}a-sa\right)=\frac{1}{t}\left(\frac{(1-s)^k-t^ks}{t^k}\right)a\\
&=\frac{1}{t}\left(\frac{(1-s)^k-(1-s)^ks}{t^k}\right)a=\frac{(1-s)^{k+1}}{t^{k+1}}a.
\end{align*}
\end{proof}
\begin{cor}
Let $M$ be an $(s,t)$-rack, and $a\in M$. Then 
\[ \oo{a}=\{\left(t^k+1-(1-s)^k\right)a\,:\; k\in\mathbb{Z}\text{ and } k\ge 0\}\cup \{\frac{(1-s)^k}{t^k}a\,:\; k\in\mathbb{Z}\text{ and } k\ge 0\}.\]
In particular, if $M$ is finite, then 
\[ \oo{a}=\{\left(t^k+1-(1-s)^k\right)a\,:\; k\in\mathbb{Z}\text{ and } k\ge 0\}= \{\frac{(1-s)^k}{t^k}a\,:\; k\in\mathbb{Z}\text{ and } k\ge 0\}.\]
\end{cor}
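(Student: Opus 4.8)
The plan is to read off the two descriptions of $\oo{a}$ directly from Lemma~\ref{f}, and then to argue that finiteness forces the two index sets to coincide. Recall that $\oo{a}=\ll a\gg=\{f_a^k(a):k\in\mathbb{Z}\}$ by Theorem~\ref{2}. Splitting the index $k$ into $k\ge 0$ and $k<0$ and substituting the closed forms from Lemma~\ref{f} gives
\[
\oo{a}=\{\,(t^k+1-(1-s)^k)a : k\ge 0\,\}\cup\Bigl\{\,\tfrac{(1-s)^{-k}}{t^{-k}}a : k<0\,\Bigr\},
\]
and re-indexing the second set by $k\mapsto -k$ turns it into $\{\tfrac{(1-s)^k}{t^k}a : k\ge 0\}$, which is exactly the first displayed claim. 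So the first part is just bookkeeping on top of Lemma~\ref{f}.

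For the ``in particular'' part, first I would observe that $f_a\in\mathrm{Aut}(M)$ is a bijection of $M$, so when $M$ is finite the cyclic group $\langle f_a\rangle\le\mathrm{Inn}(M)$ is finite; hence there is a least positive integer $N$ with $f_a^N=\mathrm{id}$ on $\oo{a}$ (equivalently $f_a^N(a)=a$ suffices, since $\oo{a}$ is generated by $a$ under $f_a$). Then for every $k\ge 0$ one has $f_a^{-k}(a)=f_a^{N-k'}(a)$ for a suitable $k'\equiv k\pmod N$ with $0\le N-k'$, so every element with a negative exponent already appears with a nonnegative exponent; this shows
\[
\oo{a}=\{\,(t^k+1-(1-s)^k)a : k\ge 0\,\}.
\]
Running the same argument with the roles of positive and negative exponents swapped (using $f_a^k(a)=f_a^{k-N\lceil k/N\rceil}(a)$ to replace a nonnegative exponent by a negative one) shows that $\{\tfrac{(1-s)^k}{t^k}a : k\ge 0\}$ already exhausts $\oo{a}$ as well, giving the claimed equality of the two sets.

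The only genuinely non-routine point is the finiteness argument: one must be careful that periodicity of $f_a$ as a map on all of $M$ (or even just the fact that $\langle f_a\rangle$ is finite) is what licenses converting negative exponents to nonnegative ones and vice versa. Everything else is substitution into Lemma~\ref{f} and re-indexing. An alternative, slightly slicker route for the finite case is to note that $\oo{a}$ is a finite set equal to the forward orbit $\{f_a^k(a):k\ge 0\}$ precisely because a bijection of a finite set has its forward orbit equal to its full orbit; this avoids exhibiting $N$ explicitly and makes the coincidence of the two index sets immediate. I would present the bijection-of-a-finite-set version, as it is the cleanest.
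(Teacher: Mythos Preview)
Your proposal is correct and matches the paper's intended reasoning. In fact, the paper states this corollary \emph{without proof}, treating it as an immediate consequence of Theorem~\ref{2} (which gives $\oo{a}=\{f_a^k(a):k\in\mathbb{Z}\}$) together with the closed formulas of Lemma~\ref{f}; your write-up simply makes explicit the routine re-indexing and the finite-order argument for $f_a$ that the authors left to the reader, and your ``bijection of a finite set has forward orbit equal to full orbit'' version is indeed the cleanest way to handle the finite case.
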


Let $R=\mathbb{Z}[t,t^{-1},s]/\left<s^2-s(1-t)\right>$  as before. The ring $\mathbb{Z}_n$ together with the scalar multiplication $f(t,t^{-1},s)\cdot a=f(t_0,t_0^{-1},s_0)a$ is  an $R$-module for which $t_0$ is  invertible with respect to the multiplication operation, and $s_0$ is an element of $\mathbb{Z}_n$ with $s_0^2=s_0(1-t_0)$. So $\mathbb{Z}_n$ is an $(s,t)$-rack. For simplicity, we use $t$ and $s$ instead of $t_0$ and $s_0$, respectively. Therefore  $\mathbb{Z}_n$ together with the binary operation $a\tg b=sa+tb$ is a rack  where $s$ and $t$ are integers with $\mathrm{gcd}(t,n)=1$. If $s$ is also invertible, then $s=1-t$, because $s^2=s(1-t)$. In this case, $\mathbb{Z}_n$ is an Alexander quandle.  Now, we are interested in cases where the corresponding quandle of $\mathbb{Z}_n$ is not an Alexander quandle.
\begin{prop}\label{prop}
Consider $\mathbb{Z}_n$ as an $(s,t)$-rack. Then in the corresponding quandle of $\mathbb{Z}_n$, we have $f_{\oo{0}}=id$   if and only if $s^2=0$  and there exists an integer $k$ such that $t^{k+1}=1-ks$.
\end{prop}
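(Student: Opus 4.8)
The plan is to reduce the condition $f_{\oo0}=\mathrm{id}$ to an arithmetic statement about the orbit $\ll 1\gg$ inside $\mathbb Z_n$, and then to decode that statement using the explicit description of the atoms of an $(s,t)$-rack supplied by Lemma~\ref{f}.

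First I would note that, for every $b\in\mathbb Z_n$, $f_{\oo0}(\oo b)=\oo0*\oo b=\oo{0\tg b}=\oo{tb}$. Since $\oo a=\ll a\gg$ and, by Theorem~\ref{2}, the atoms $\ll tb\gg$ and $\ll b\gg$ are either equal or disjoint, the equality $\oo{tb}=\oo b$ is equivalent to $tb\in\ll b\gg$. By Theorem~\ref{2}(1) and Lemma~\ref{f} we have $\ll b\gg=\{c_kb:\ k\in\mathbb Z\}$, where the coefficients $c_k\in\mathbb Z_n$ (namely $c_k=t^k+1-(1-s)^k$ for $k\ge 0$ and $c_k=(1-s)^{-k}/t^{-k}$ for $k<0$) do not depend on $b$. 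Hence $tb\in\ll b\gg$ holds for all $b$ if and only if it holds for $b=1$, i.e.\ if and only if $t\in\ll 1\gg=\{c_k:\ k\in\mathbb Z\}$. So the task is reduced to proving
\[ t\in\ll 1\gg \iff s^2=0 \text{ and } t^{k+1}=1-ks \text{ for some } k\in\mathbb Z. \]

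The computational engine is the relation $s^{i+1}=s(1-t)^i$ for $i\ge 0$, which follows by a one-line induction from $s^2=s(1-t)$ (cf.\ Lemma~\ref{calc}); it gives $s\,g(s)=s\,g(1-t)$ for every $g\in\mathbb Z[s]$, in particular $s(1-s)^k=st^k$. Moreover, if $s^2=0$ then $s(1-t)=0$, hence $st=s$, hence $st^i=s$ for all $i\in\mathbb Z$, and $(1-s)^k=1-ks$. For the direction $(\Rightarrow)$, if $t\in\ll 1\gg$ then $t$ lies in one of the two families from Lemma~\ref{f}: either $t=t^k+1-(1-s)^k$ or $t^{k+1}=(1-s)^k$ for some $k\ge 0$. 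Multiplying the pertinent identity by $s$ and using $s(1-s)^k=st^k$ collapses it to $s(t-1)=0$ (in the second case after cancelling the unit $t^k$), so $s^2=s(1-t)=0$; substituting $(1-s)^k=1-ks$ then yields $t^{k+1}=1-ks$ in the second case, and $t^{1-k}=1+ks=1-(-k)s$ (after multiplying $t=t^k+ks$ by $t^{-k}$ and using $st^{-k}=s$) in the first, and either way the right-hand side holds. For $(\Leftarrow)$, assume $s^2=0$ and $t^{k+1}=1-ks$: if $k\ge 0$ then $1-ks=(1-s)^k$, so $t=(1-s)^k/t^k=f_1^{-k}(1)\in\ll1\gg$; if $k<0$, put $j=-k>0$, so $t^{1-j}=1+js$, whence $t^{j-1}=(1+js)^{-1}=1-js$, and multiplying by $t$ (using $st=s$) gives $t^j=t-js$, i.e.\ $t=t^j+1-(1-s)^j=f_1^{\,j}(1)\in\ll1\gg$. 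In both cases $t\in\ll 1\gg$, so $f_{\oo0}=\mathrm{id}$ by the reduction above.

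The one spot that calls for an actual idea is the reduction in the second paragraph: one must realize that the orbit coefficients $c_k$ are independent of the base point, which is exactly what makes ``$tb\in\ll b\gg$ for all $b$'' collapse to ``$t\in\ll1\gg$''. Everything afterward is routine manipulation governed by $s^2=s(1-t)$, the only genuine bookkeeping being the short separate treatment of negative $k$ in the $(\Leftarrow)$ direction.
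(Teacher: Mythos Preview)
Your proof is correct and follows essentially the same route as the paper: both reduce $f_{\oo0}=\mathrm{id}$ to the statement $\oo t=\oo1$ (equivalently $t\in\ll1\gg$), then use Lemma~\ref{f} together with the identity $s(1-s)^k=st^k$ coming from Lemma~\ref{calc} to extract $s^2=0$ and $t^{k+1}=1-ks$. The only cosmetic difference is that the paper invokes finiteness of $\mathbb Z_n$ twice (to use only the branch $t=(1-s)^k/t^k$ in the forward direction, and to assume $k\ge0$ in the converse), whereas you instead treat both branches of $\ll1\gg$ and both signs of $k$ by hand; your version is slightly longer but makes the reduction ``$tb\in\ll b\gg$ for all $b$ $\Leftrightarrow$ $t\in\ll1\gg$'' explicit, which the paper leaves implicit.
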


\begin{proof}
First, suppose that $f_{\oo{0}}=id$. It follows from $f_{\oo{0}}(\oo{1})=\oo{1}$ that $\oo{t}=\oo{1}$, and hence there exists a non-negative integer $k$ with $t=t^{-k}(1-s)^k$  (here, $t^{-1}$ is the inverse of $t$ in $\mathbb{Z}_n$). Since $st=s-s^2$, we have $t^{-k}s(1-s)^k=s-s^2$. It follows from Lemma \ref{calc} that $s(1-s)^k=t^ks$. Therefore $s^2=0$, and hence $(1-s)^k=1-ks$.

Conversely, let $s^2=0$ and $t^{k+1}=1-ks$  for some integer $k$. Due to the finiteness of $\mathbb{Z}_n$, we can assume that $k$ is non-negative. Thus $t^{k+1}=(1-s)^k$ which implies that $t=t^{-k}(1-s)^k$, and hence $f_{\oo{0}}(\oo{a})=\oo{a}$  for all $a\in\mathbb{Z}_n$.
\end{proof}

\begin{cor}\label{cor}
Consider $\mathbb{Z}_n$ as an $(s,t)$-rack with $s^2=0$, $s\neq 0$ and $t^{k+1}=1-ks$  for some integer $k$. Then the corresponding quandle of $\mathbb{Z}_n$ is not an Alexander quandle.
\end{cor}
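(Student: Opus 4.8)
The plan is to derive the corollary from Proposition \ref{prop} together with the elementary observation that an Alexander quandle in which some inner map $f_{x_0}$ equals the identity is necessarily the trivial quandle. Throughout, write $Q=\oo{\mathbb{Z}_n}$ for the corresponding quandle, with operation $\oo a * \oo b=\oo{sa+tb}$.

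First I would record two facts about $Q$ under the standing hypotheses $s^2=0$, $s\neq 0$, $t^{k+1}=1-ks$. On the one hand, since $f_0(b)=s\cdot 0+tb=tb$, we get $f_0^m(0)=0$ for every $m$, so $\ll 0\gg=\{0\}$, that is $\oo 0=\{0\}$. Hence $f_{\oo 1}(\oo 0)=\oo{s\cdot 1+t\cdot 0}=\oo s$, and $\oo s=\oo 0$ would force $s\in\ll 0\gg=\{0\}$, i.e. $s=0$, contrary to hypothesis; therefore $f_{\oo 1}\neq\mathrm{id}$ and $Q$ is not the trivial quandle. On the other hand, the hypotheses include precisely the condition appearing in Proposition \ref{prop}, so $f_{\oo 0}=\mathrm{id}$ in $Q$.

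Next I would prove the auxiliary claim: if $N$ is an Alexander quandle, say a $\mathbb{Z}[u,u^{-1}]$-module with $a\tg b=(1-u)a+ub$, and $f_{x_0}=\mathrm{id}_N$ for some $x_0\in N$, then $N$ is trivial. Indeed, $f_{x_0}(y)=(1-u)x_0+uy=y$ for all $y\in N$ rewrites as $(1-u)x_0=(1-u)y$ for all $y$; taking $y=0$ gives $(1-u)x_0=0$, whence $(1-u)y=0$ for every $y\in N$, i.e. $uy=y$, and therefore $a\tg b=(1-u)a+ub=b$ for all $a,b\in N$.

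Finally I would combine the ingredients. Suppose, toward a contradiction, that $Q$ is isomorphic as a quandle to an Alexander quandle $N$ via $\phi$. Since $f_a(b)=a*b$ depends only on the quandle operation, a quandle isomorphism conjugates inner maps, so $f_{\phi(\oo 0)}=\phi f_{\oo 0}\phi^{-1}=\phi\,\mathrm{id}\,\phi^{-1}=\mathrm{id}_N$. By the claim, $N$ is trivial, hence $Q$ is trivial, contradicting $f_{\oo 1}\neq\mathrm{id}$. I do not expect a genuine obstacle here: the only point requiring care is fixing the meaning of ``is an Alexander quandle'' as ``is quandle-isomorphic to one'' and noting that the maps $f_a$ transport along quandle isomorphisms, after which Proposition \ref{prop} does all the work.
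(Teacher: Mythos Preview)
Your proof is correct and follows essentially the same route as the paper's: both use Proposition~\ref{prop} to get $f_{\oo 0}=\mathrm{id}$, show $f_{\oo 1}\neq\mathrm{id}$ via $\oo 0=\{0\}$ and $s\neq 0$, prove that an Alexander quandle with some $f_{x_0}=\mathrm{id}$ is trivial, and conclude by contradiction. Your version is slightly more careful in two places (you make explicit that ``is an Alexander quandle'' means ``quandle-isomorphic to one'' and verify that inner maps transport under isomorphism, and you drop the unnecessary finiteness hypothesis on the Alexander quandle), but these are refinements of the same argument rather than a different approach.
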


\begin{proof}
Let $M$ be a finite Alexander quandle. We claim that $M$ is the trivial quandle or $f_x\neq id$ for any $x\in M$. Assume that $M$ is not the trivial quandle. If $f_x=id$  for some $x\in M$, then we have $(1-t)x=(1-t)y$  for any $y\in M$. Thus $f_x(y)=y$, for all $x,y\in M$. It follows that $M$ is the trivial quandle which is a contradiction. Therefore $f_x\neq id$ for any $x\in M$.\\
Now consider $\mathbb{Z}_n$ as an $(s,t)$-rack  with $s^2=0$, $s\neq 0$ and $t^{k+1}=1-ks$. It follows from Proposition \ref{prop} that $f_{\oo{0}}=id$. We show that $f_{\oo{1}}\neq id$. Note that if  $f_{\oo{1}}= id$, then we have $f_{\oo{1}}(\oo{0})= \oo{0}$. It follows that $\oo{s}=\oo{0}$ while $\oo{0}=\{0\}$, and hence $s=0$ which is a contradiction. Therefore, the  corresponding quandle of  $\mathbb{Z}_n$ is not an Alexander quandle.
\end{proof}
As an example which satisfies the assumptions of Corollary \ref{cor}, let $n=p_1^{\alpha_1}\cdots p_r^{\alpha_r}$ such that $\alpha_i>1$  for some $1\le i\le r$. We can apply $s=p_1^{\beta_1}\cdots p_r^{\beta_r}$ with $\beta_j=\lfloor\frac{\alpha_j+1}{2}\rfloor$  for $1\le j\le r$. We  have $s\neq 0$ and $s^2=0$. By applying $t=1$  and $k=s$, it follows from   Corollary \ref{cor} that the corresponding quandle of $\mathbb{Z}_n$ as an $(s,t)$-rack is not an Alexander quandle.

\begin{exmp}
Let $s=3$ and $t=1$, and define $a\tg b=3a+b$ on $\mathbb{Z}_9$. Then the atoms of $\mathcal{R}(\mathbb{Z}_9)$ are the following:
\[ \oo{0}=\{0\},\quad \oo{1}=\{1,4,7\},\quad \oo{2}=\{2,5,8\},\quad \oo{3}=\{3\},\quad \oo{6}=\{6\}.\]
Now, the corresponding quandle of this rack is given by $f_{\oo{0}}=f_{\oo{3}}=f_{\oo{6}}=id$ and  $ f_{\oo{1}}= f_{\oo{2}}^{-1}=(\oo{0}\;\,\oo{3}\;\,\oo{6})\in S_5$.
\end{exmp}
\begin{figure}
\begin{center}
\includegraphics[scale=.27,trim={0 2cm 0 2cm},clip]{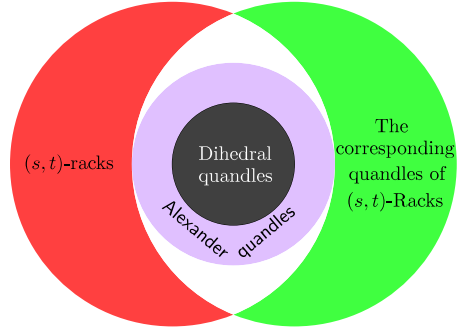}
\caption{\small{$(s,t)$-racks versus their corresponding  quandles}}
\end{center}
\end{figure}

Despite the corresponding quandles  of  $(s,t)$-racks are not necessarily Alexander quandles, there are some similarities between them and Alexander quandles, as we see in the following proposition:
\begin{prop}
Let $M$ be an $(s,t)$-rack. Then in the  corresponding quandle of $M$, we have $\oo{x}*\oo{y}=\oo{sx+(1-s)y}$, for all $x,y\in M$.
\end{prop}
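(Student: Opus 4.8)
The plan is to prove the identity $\oo{x}*\oo{y}=\oo{sx+(1-s)y}$ by exhibiting an explicit element of $\oo{x\tg y}$ of the form $(sx+(1-s)y)$ times nothing extra, i.e.\ by showing $sx+(1-s)y$ lies in $\oo{f_x(y)}=\ll f_x(y)\gg$. Recall that $*$ is well-defined (by the theorem establishing that $\oo R$ is a quandle), so $\oo{x}*\oo{y}=\oo{x\tg y}=\oo{f_x(y)}$; by Theorem~\ref{2}, $\oo{f_x(y)}=\ll f_x(y)\gg=\{f_{f_x(y)}^{\,n}(f_x(y)):n\in\mathbb{Z}\}$. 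Also recall from the proof of Theorem~\ref{2} the identity \eqref{e}, namely $f_{f_a^n(a)}^m=f_a^m$; I expect to need the analogous fact that $f_{f_x(y)}$ is determined by $x$ and $y$ in a usable way. Since $f_x(y)=sx+ty$, Lemma~\ref{0}(1) gives $f_{f_x(y)}=f_xf_yf_x^{-1}$, and one computes $f_{f_x(y)}(z)=f_x f_y f_x^{-1}(z)$; because the module operations are $\mathbb{Z}[t,t^{-1},s]$-linear with $s^2=s(1-t)$, this conjugate will again act as $z\mapsto (\text{const})\cdot(sx+ty)+t z$ in a way I can compute in closed form.

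First I would write $c:=f_x(y)=sx+ty$ and compute $f_c^{\,k}(c)$ using Lemma~\ref{f} applied with $a=c$: that lemma gives $f_c^{\,k}(c)=(t^k+1-(1-s)^k)c$ for $k\ge 0$. Then $\oo{c}$ is the set of all such elements, and the goal reduces to finding a single $k$ (typically $k=1$ or a small value, possibly using finiteness or the relation $s^2=s(1-t)$) for which $(t^k+1-(1-s)^k)(sx+ty)=sx+(1-s)y$ in $M$; more precisely, since we only need $\oo{(t^k+1-(1-s)^k)c}=\oo{sx+(1-s)y}$, it suffices to show the element $sx+(1-s)y$ itself equals $f_c^{\,k}(c)$ for some $k$, or directly that $sx+(1-s)y\in\ll c\gg$. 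The cleanest route is probably to observe $sx+(1-s)y = s(x-y)+y$ and $c = s(x-y)+ty$, and to track the two "components" $s(x-y)$ and $y$ under iteration of $f_c$; here the relation $s\cdot s = s(1-t)$, equivalently $s(1-s)=st$, should collapse the $s(x-y)$-part so that $f_c^{\,k}$ acts on it simply by multiplication by $(1-s)^k$ while scaling the $ty$-part appropriately, eventually hitting $s(x-y)+y$.

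The main obstacle I anticipate is bookkeeping: the identity must hold in an \emph{arbitrary} $(s,t)$-rack $M$ (an arbitrary $R$-module), so I cannot invoke finiteness to guarantee the relevant power of $t$ or $1-s$ returns to $1$; I must produce the element $sx+(1-s)y$ as an \emph{exact} value of some $f_c^{\,k}(c)$ or as an exact $\mathbb{Z}[t,t^{-1},s]/\langle s^2-s(1-t)\rangle$-combination landing in $\ll c\gg$, purely from the defining relation. I expect to lean on Lemma~\ref{calc} (the normal form $h(t)g(s)=h(1-s)(g(s)-g_0)+g_0h(t)$) to rewrite mixed monomials in $s$ and $t$ and to verify the candidate identity algebraically. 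A secondary subtlety is making sure the chosen $k$ (or combination) is independent of $x$ and $y$; since everything is linear this should be automatic once the scalar identity in $R$ is checked.

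In summary: (i) reduce via well-definedness of $*$ and Theorem~\ref{2} to showing $sx+(1-s)y\in\ll f_x(y)\gg$; (ii) set $c=f_x(y)=sx+ty$ and use Lemma~\ref{f} to describe $f_c^{\,k}(c)$ explicitly; (iii) using $s^2=s(1-t)$ (hence $s(1-s)=st$) and Lemma~\ref{calc}, verify the scalar identity that makes some $f_c^{\,k}(c)$ equal to $sx+(1-s)y$, or equivalently that their classes coincide; (iv) conclude $\oo{x}*\oo{y}=\oo{c}=\oo{sx+(1-s)y}$. The algebraic core — step (iii) — is the one to handle with care, and Lemma~\ref{calc} is the tool that should make it routine.
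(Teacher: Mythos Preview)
Your plan is essentially the paper's: both arguments reduce, via the definition of $*$ and Lemma~\ref{f}, to showing that $sx+ty$ and $sx+(1-s)y$ lie in the same atom by exhibiting one as $f_{(\cdot)}^{k}(\cdot)$ of the other, and both lean on the relation $s(1-s)=st$ (equivalently $s^2=s(1-t)$) together with Lemma~\ref{calc}.

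There is one difference worth noting. The paper sets $c'=sx+(1-s)y$ and computes $f_{c'}^{-(\phi(n)-1)}(c')=sx+ty$ using Euler's theorem, so its proof is tacitly written for $M=\mathbb{Z}_n$; your worry about whether the argument goes through for an \emph{arbitrary} $(s,t)$-rack is therefore well founded. In fact your route avoids this entirely: with $c=f_x(y)=sx+ty$, the value $k=-1$ already does the job in any $M$, since by Lemma~\ref{f}
\[
f_c^{-1}(c)=\frac{1-s}{t}\,(sx+ty)=\frac{s(1-s)}{t}\,x+(1-s)y=\frac{st}{t}\,x+(1-s)y=sx+(1-s)y,
\]
using only $s(1-s)=st$. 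So step~(iii) in your outline is immediate with $k=-1$, no finiteness or Euler's theorem needed, and your version is actually a bit cleaner and more general than the paper's.
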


\begin{proof}
By Lemma \ref{calc}, we have 
\[ \frac{(1-s)^{\phi(n)-1}}{t^{\phi(n)-1}}\left(sx+(1-s)y\right)=\frac{s(1-s)^{\phi(n)-1}}{t^{\phi(n)-1}}x+\frac{1}{t^{\phi(n)-1}}y=sx+ty,\]
where $\phi$ is the Euler's phi function.
\end{proof}
Finally, as an application, we color certain knot diagrams using the corresponding quandles of  $(s,t)$-racks. 
Let $s=2$ and $t=9$, and define $a\tg b=2a+9b$ on $\mathbb{Z}_{20}$. We can get the coloring given in Figure 9 for the oriented $5_1$. But one could not color the oriented $5_2$ by the corresponding quandle of $\mathbb{Z}_{20}$ in which at least two distinct colors are used. Indeed, suppose that we could do so, as shown in Figure 10. Thus $\oo{8x-7y}=\oo{x}$, and hence $8x-7y=x$ or $8x-7y=11x$, because $\oo{x}=\{x,11x\}$. Therefore $\oo{x}=\oo{y}$ which is a contradiction. This observation shows that the two oriented knots $5_1$ and $5_2$ are not equivalent.
\begin{figure}[H]
\begin{center}
\includegraphics[scale=.45,trim={0 -.4cm 0.5cm 0},clip]{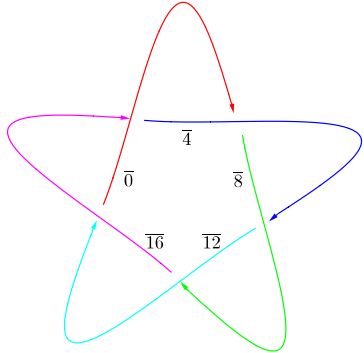}
\caption{\small{A coloring of oriented $5_1$ by the corresponding quandle  of $\mathbb{Z}_{20}$ as a $(2,9)$-rack}}
\end{center}
\end{figure}
%\end{exmp}
\begin{figure}[H]
\begin{center}
\includegraphics[scale=.5,trim={0 .4cm 1.5cm 1.2cm},clip]{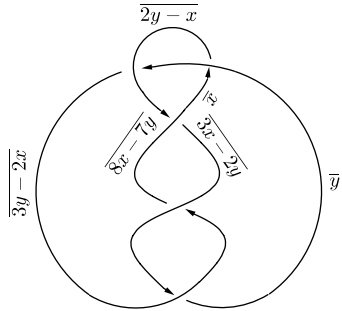}
\caption{\small{A  coloring of oriented $5_2$ by the corresponding quandle  of $\mathbb{Z}_{20}$ as a $(2,9)$-rack}}
\end{center}
\end{figure}
%\end{exmp}

\section{Acknowledgements}
The authors would like to thank Sara Saeedi Madani and Volkmar Welker for their useful comments. The authors would also like to thank the institute for research in fundamental science (IPM). The research of the second author was in part supported by a grant from IPM (No. 96050212).

\end{document}